\newcommand{\vekk}[1]{}
\newtheorem{theo}{Theorem}
\newtheorem{pro}{Proposition}
\newtheorem{lemma}{Lemma}
\newtheorem{defin}{Definition}
\newenvironment{proof}{{\it Proof.}}%
        {\ifvmode\else\unskip\fi ~\penalty10000 \hfill%
        $\Box$\vspace{0ex}}
\newcommand{\abs}[1]{ {\left | #1 \right |} }
\newcommand{\dlik}{\mathop{:=}\nolimits}
\newcommand{\Hi}{{\cal H}}
\newcommand{\id}{\mathop{id}\nolimits}
\newcommand{\im}{\mathop{Im}\nolimits}
\newcommand{\imply}{\;\; \Rightarrow \;\;}
\newcommand{\into}{\mbox{$\: \rightarrow \:$}}
\newcommand{\norm}[1]{\left \| #1\right\|}
\newcommand{\RealN}{{\mbox{$\mathbb R$}}} 
\newcommand{\st}{\mid}
\newcommand{\spec}{\mathop{Sp}\nolimits}
\begin{document}

\begin{frontmatter}

\title{Nonlinear probability\\ 
     {\small A theory with incompatible stochastic variables}}

\runtitle{Nonlinear probability}

\begin{aug}
\author{\fnms{Gunnar} \snm{Taraldsen}\corref{}\ead[label=e1]{Gunnar.Taraldsen@ntnu.no}}

\address{Trondheim, Norway.
\printead{e1}}

\runauthor{Taraldsen}

\affiliation{Norwegian University of Science and Technology}

\end{aug}

\begin{abstract}
In 1991 J.F. Aarnes introduced the concept of quasi-measures
in a compact topological space $\Omega$ and
established the connection between quasi-states on
$C (\Omega)$ and quasi-measures in $\Omega$.
This work solved the linearity problem of quasi-states
on $C^*$-algebras formulated by R.V. Kadison in 1965.
The answer is that a quasi-state need not be linear,
so a quasi-state need not be a state.
We introduce nonlinear measures in a space
$\Omega$ which is a generalization of a 
measurable space.
In this more general setting we are still able to
define integration and establish a representation theorem
for the corresponding functionals.
A probabilistic language is choosen since we feel that the
subject should be of some interest to probabilists.
In particular we point out that the theory allows for
incompatible stochastic
variables.
The need for incompatible variables is well known in
quantum mechanics,
but the need  seems natural also in other contexts
as we try to explain by a questionary example. 
\end{abstract}

\begin{keyword}
\kwd{Epistemic probability}
\kwd{Integration with respect to measures and other set functions}
\kwd{Banach algebras of continuous functions}
\kwd{Set functions and measures on topological spaces}
\kwd{States}
\kwd{Logical foundations of quantum mechanics}
\end{keyword}

\tableofcontents

\end{frontmatter}

\newpage

\section{Incompatible experiments.\label{SCOMP}}

The purpose of this section is to indicate a general
model for probability theory \cite{TARALDSEN14} which is more general
than the model formulated by A. Kolmogorov in 1933 \cite{KOLMOGOROV}.  
A realization of this is given by the quantum mechanics 
formulated by J. von Neumann in 1932 \cite{NEU:QM},
but also by the theory of nonlinear probability formulated
in the following sections in this paper.
The main point is that the Kolmogorov model does not allow
for incompatible experiments. 
Incompatible experiments are central in the theory of
nonlinear probabilities.

Let $\Omega_X$ be the set of possible results $x$ of an experiment $X$.
The distribution $P_X$ gives the probability $P_X (A)$
for the event that the experiment gives an outcome $x \in A$.
Assume that $Y$ is an experiment which gives the result $y = \phi (x)$
whenever $X$ has the outcome $x$.
{\em In this case we say that the experiment $Y$ is
included in the experiment $X$},
and we write $Y = \phi (X)$.
The crucial observation now is that $y \in B$
is equivalent with $x \in \phi^{-1} (B)$,
and the probabilistic interpretation gives
the equality 
$P_Y (B) = P_X (\phi^{-1} (B))$.
In a more compact notation we have
\begin{equation}\label{EINCL}
 P_Y = P_X \circ \phi^{-1} .
\end{equation}
Let $\cal O$ be the family of experiments in a given model.
The family of experiments included in the experiment $X$
is denoted by ${\cal O}_X$.
{\em The experiments $X$ and $Y$ are said to be compatible
if there exists a third experiment $Z$ which contains 
both $X$ and $Y$},
or equivalently $X,Y \in {\cal O}_Z$.

In the Kolmogorov formulation $\cal O$ is given by the family of measurable functions 
$X: \Omega \into \RealN$  defined on a probability space $\Omega$.
It is then assumed that the result of an experiment can be represented by a real number.
This will also be assumed in the generalisation presented in the following.

The distribution $P_X$ of $X$ is given by
$P_X (A) = P (X \in A) \dlik P (X^{-1} (A))$, or 
\begin{equation}
P_X = P \circ X^{-1} .
\end{equation}
In this model all experiments become compatible:
Given two stochastic variables $X,Y$,
there exists a stochastic variable $Z$ and two measurable
functions $\phi_x, \phi_y : \RealN \into \RealN$ such that
$X = \phi_x (Z)$ and $Y = \phi_y (Z)$.

There are situations that demand models with incompatible experiments.
The most famous example is probably given in quantum mechanics.
The experiments $P$ and $Q$ corresponding to the measurement of 
a particles momentum and position are incompatible.
It is an empirical fact that it is impossible to construct an experiment
$Z$ which effects a simultaneous measurement of both $P$ and $Q$ \cite{JAUCH}.
In the von Neumann formulation of quantum mechanics $\cal O$ is
given by the family of selfadjoint operators $X$ on a Hilbert space $\Hi$.
The family ${\cal O}_X$ of experiments $\phi (X)$ contained in $X$
is defined by the functional calculus of $X$ \cite{NEU:QM,TARALDSEN14}.

Kolmogorov's formulation of probability theory is not general
enough for the formulation of quantum mechanics because every
experiment in the Kolmogorov model are compatible.
We will indicate another situation in which incompatible
experiments seem to appear naturally.

Consider a questionary.
Let two questions a) and b) be given.
Assume that the questions can only be answered with a yes or a no.
Let experiment $X$ be that question a) is posed to a person
and the outcome is registered as $x = 1$ if the person answers yes
and as $x = 0$ in the opposite case.
This gives $\Omega_X = \{0, 1\}$.
The distribution $P_X$
is interpreted as an a'priori judgement of the outcome.
Let experiment $Y$ be given in the same way by posing question b)
to an identical person.
The claim now is that there are no general reasons which imply
that experiments $X$ and $Y$ are compatible.
Consider the experiment $Z$ given by posing both question a) and question b)
to a third identical person.
At first sight it may seem that the experiment $Z$ contains both $X$ and $Y$.
The point is, however, that the answer to question a) need not be the same
in experiments $X$ and $Z$.
In experiment $Z$ the person also has to consider question b),
and this may influence the consideration of question a)
beyond the effects of correlations.

The main point in this section is that it seems to restrictive
to only consider probabilistic models with compatible experiments.
Quantum mechanics gives one possible generalization of
the Kolmogorov probability theory.
The theory of nonlinear measures is closer to Kolmogorov's probability
theory and gives models with incompatible experiments.

Aarnes and Rustad \cite{AarnesRustad99qprob} give a different interpretation and refer to an earlier version of 
the present article.

\section{Main results and definitions.\label{SDEF}}

In this paper we  establish a theory
of integration with respect to a nonlinear probability $P$ on
an arbitrary nonempty set $\Omega$.
The theory is a generalization of the theory of quasi-measures
on topological spaces $\Omega$ \cite{AARNES:QUASI}
and at the same time  a generalization of the theory of measures
on measurable spaces $\Omega$ \cite{RU:ANAL}. 
The main result is a representation theorem 
for corresponding functionals $E$.
This section concludes with the statement of
the representation theorem,
but first we need
some preliminary definitions.
We find it convenient to take the function class corresponding
to the family of measurable functions as our starting point.

\begin{defin}
A family $\cal O$ of functions $X: \Omega \into \RealN$
is an algebra of observables on $\Omega$ if
$X_1, \ldots , X_n \in {\cal O}$ and 
$\phi \in C (\im (X_1,\ldots , X_n ))$ implies
that $\phi (X_1, \ldots , X_n) \in {\cal O}$.
\end{defin}

In the above $C (\im (X_1,\ldots , X_n ))$ is the family of real
valued continuous
functions on the image 
$\im (X_1,\ldots , X_n ) = \{(X_1 (\omega),\ldots , X_n (\omega) ) \st 
\omega \in \Omega\}$
of the function $(X_1,\ldots , X_n )$.
The function  $\phi (X_1, \ldots , X_n)$ is defined by
$\phi (X_1, \ldots , X_n) (\omega) = \phi (X_1 (\omega), \ldots , X_n (\omega))$.
The case studied by J.F. Aarnes \cite{AARNES:QUASI} is obtained by letting
$\cal O$ be the family of real valued continuous functions on a
compact Hausdorff space $\Omega$.
We will refer to this as the compact case.
An algebra of observables on $\Omega$ is in particular an
algebra of real functions on $\Omega$ with the conventional
addition and multiplication of functions. 

A vector-observable $Y$ is
a tuple $Y = (X_1, \ldots, X_n)$ of observables.
The algebra of observables ${\cal O}_Y$ generated by a vector-observable
$Y$ consists of all functions $\phi (Y)$ where
$\phi \in C (\im (Y))$.
The defining property of an algebra of observables is then that
it contains every algebra of observables generated by the
associated vector-observables.
It should be observed that the above also can be used 
to define the algebra of observables generated by any
given family of real valued functions on an arbitrary set $\Omega$.
The family of bounded observables 
${\cal O}^{\infty} = \{X \in {\cal O} 
\st \norm{X} \dlik \sup_\omega \abs{X (\omega)} < \infty \}$
is not in general an algebra of observables,
but it is a normed algebra of functions.
The spectrum of a vector-observable $Y$ is the closure of its image,
$\spec Y = \overline{\im Y}$.
The closed algebra ${\cal A}_Y$ generated by a vector-observable
$Y$ consists of all functions $\phi (Y)$,
$\phi \in C (\spec (Y))$.
The inclusion ${\cal A}_Y \subset {\cal O}_Y$ may be strict.

\begin{defin}
An observable expectation is a functional $E: {\cal O}^\infty \into \RealN$
with the properties
\begin{description}
\item[{\bf (I)}] 
$E$ is quasi-linear: It is 
linear on each ${\cal A}_X$, $X \in {\cal O}^\infty$.
\item[{\bf (II)}] $E$ is positive:
   $X \ge 0 \imply E [X] \ge 0$.
\item[{\bf (III)}] $E$ is normalized: $E [1] = 1$.
\item[{\bf (IV)}] 
$E$ enjoys the monotone convergence property:
If $X_i \uparrow X$, then $E [X_i] \uparrow E [X]$. 
\end{description}
\end{defin}

The notation $X_i \uparrow X$ means that
$X_{i} \le X_{i + 1}$ and $X = \lim X_i$ pointwise.
A quasi-additive expectation is a functional 
$E: {\cal O}^\infty \into \RealN$ which obeys the axioms { (I-III)}.
In general the class of  quasi-additive expectations contains
the  class of observable expectations,
but in the compact case there is no difference and our definition
agrees with the original definition by J.F. Aarnes.
We will refer to axiom { (IV)} as the monotone convergence axiom,
because it is true for ordinary integrals by the Lebesgue monotone
convergence theorem \cite{RU:ANAL}. 
In the compact case the monotone convergence axiom holds even
for monotone nets of functions.

Let ${\cal O}$ be an algebra of observables on $\Omega$.
A subset $F$ of $\Omega$ is a zero set if 
$F = X^{-1} \{0\}$ for some observable $X$. 
The family of zero sets is denoted by ${\cal E}_F$ and the family
${\cal E}_U$ of co-zero sets is the family of the complements of the zero sets,
${\cal E}_U \dlik \{U \st U^c \in {\cal E}_F\}$.
The union of the two families is the family ${\cal E}$ of
observable sets.
In $\cal E$ we define the operation $\uplus$ such that $A \uplus B$
is the union of $A,B \in {\cal E}$, but it is only defined
when $A \cup B \in {\cal E}$ and $A \cap B = \emptyset$.
The axioms in the following definition of an observable probability
are identical with the axioms in the definition of a 
probability measure, but the $\sigma$-field is replaced by
the family of observable sets:

\begin{defin}
\label{DPROB}
An observable probability is a set function 
$P: {\cal E} \into \RealN$ such that
\begin{description}
\item[{\bf (I)}] 
$P$ is additive: $P (A \uplus B) = P (A) + P (B)$.
\item[{\bf (II)}] $P$ is positive: $P (A) \ge 0$.
\item[{\bf (III)}] $P$ is normalized: $P (\Omega) = 1$.
\item[{\bf (IV)}] 
$P$ enjoys the monotone convergence property:
If $U_i \uparrow U$ in ${\cal E}_U$, then $P (U_i) \uparrow P (U)$. 
\end{description}
\end{defin}

The notation $U_i \uparrow U$ means that
$U_i \subset U_{i + 1}$ and $U = \cup_i U_i$,
and it is assumed that $U_i, U \in {\cal E}_U$.
We keep the term nonlinear probability as a synonym to the 
term observable probability,
since the term quasi-measure is established in the literature.
J.F. Aarnes has suggested a third convention:
A probability $P$ on a family of observable sets $\cal E$ is 
defined to be an observable probability as above. 
A quasi-additive probability is a set function $P$
which obeys the axioms { (I-III)}, and  
\begin{description}
\item[{\bf (IV)'}] 
If $U$ is a co-zero set, then 
$P (U) = \sup \{P (F) \st F \subset U,\; F \in {\cal E}_F\}$.
\end{description}
Our results imply that the class of 
quasi-additive probabilities contains the class of nonlinear probabilities,
but in the compact case the classes are equal.
In the next section we define integration
with respect to an observable probability,
and show that the integral gives an observable expectation.
The converse is our main result:
\begin{theo}\label{TREP}
If $E$ is an observable expectation on $\cal O$,
there exists a unique observable probability $P$ on $\cal E$ such that
\begin{equation}
E [X] = \int X(\omega) P (d \omega) .
\end{equation}
\end{theo}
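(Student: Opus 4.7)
My plan is to adapt Aarnes's strategy for extracting a quasi-measure from a quasi-state, with the Riesz--Markov theorem applied on each singly-generated subalgebra ${\cal A}_X$ serving as the bridge to the compact case. For every $X \in {\cal O}^\infty$ the restriction $E|_{{\cal A}_X}$ is, by (I)--(IV), a positive, monotone-continuous, unital linear functional on ${\cal A}_X \cong C(\spec X)$, so Riesz--Markov supplies a unique Borel probability $\mu_X$ on $\spec X \subset \RealN$ with $E[\phi(X)] = \int \phi \, d\mu_X$ for all $\phi \in C(\spec X)$. The family $\{\mu_X\}_{X \in {\cal O}^\infty}$ furnishes the local building blocks out of which the global $P$ will be assembled.

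I then define $P$ on ${\cal E}$ by inner and outer expectations in the spirit of Aarnes,
\begin{equation*}
P(F) = \inf\{E[g] \st g \in {\cal O}^\infty,\, 0 \le g \le 1,\, g \equiv 1 \text{ on } F\} \qquad (F \in {\cal E}_F),
\end{equation*}
\begin{equation*}
P(U) = \sup\{E[g] \st g \in {\cal O}^\infty,\, 0 \le g \le 1,\, g \equiv 0 \text{ off } U\} \qquad (U \in {\cal E}_U).
\end{equation*}
The substitution $g \mapsto 1 - g$, legal by quasi-linearity on ${\cal A}_g$, forces $P(F) + P(F^c) = 1$ for any $F \in {\cal E}_F$; positivity and $P(\Omega) = E[1] = 1$ are immediate.

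The crux is additivity, $P(A \uplus B) = P(A) + P(B)$. By complementation it suffices to treat disjoint zero sets $A = X_A^{-1}\{0\}$, $B = X_B^{-1}\{0\}$ with $0 \le X_A, X_B \le 1$. Since $A \cap B = \emptyset$, the sum $X_A + X_B$ is strictly positive on $\Omega$, and
\begin{equation*}
Y \dlik \frac{X_A}{X_A + X_B} \in {\cal O}^\infty
\end{equation*}
is a single bounded observable separating $A$ and $B$, with $Y^{-1}\{0\} = A$, $Y^{-1}\{1\} = B$, and $Y^{-1}\{0,1\} = A \cup B$. I then aim to establish
\begin{equation*}
P(A) = \mu_Y(\{0\}),\quad P(B) = \mu_Y(\{1\}),\quad P(A \cup B) = \mu_Y(\{0,1\}),
\end{equation*}
whence additivity of $\mu_Y$ finishes the job. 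The inequality $P(A) \le \mu_Y(\{0\})$ is free: test against $\phi_n(Y) \in {\cal A}_Y$ with $\phi_n \downarrow 1_{\{0\}}$ on $\spec Y$ and use the Riesz formula. The reverse is a pinching: for any competitor $g \in {\cal O}^\infty$ with $g \equiv 1$ on $A$ and $0 \le g \le 1$, set $h_n = \max(g, \phi_n(Y)) \in {\cal O}^\infty$. Because $g \ge 1_A$ and $\phi_n(Y) \downarrow 1_A$, one has $h_n \downarrow g$, so axiom (IV) applied to $1 - h_n \uparrow 1 - g$ forces $E[h_n] \downarrow E[g]$; the inequality $h_n \ge \phi_n(Y)$ then yields $E[h_n] \ge \int \phi_n \, d\mu_Y \to \mu_Y(\{0\})$, as required.

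Monotone convergence of $P$ on $U_n \uparrow U$ in ${\cal E}_U$ follows similarly, by writing $U_n = \{Y_n > 0\}$ with $0 \le Y_n \le 1$ and using $\psi_k(Y_n) = \min(kY_n, 1) \uparrow 1_{U_n}$ together with axiom (IV) for $E$ and a diagonal extraction. Integration is then built from the layer cake
\begin{equation*}
\int X \, dP \dlik \int_0^\infty P(\{X > t\}) \, dt - \int_0^\infty P(\{X < -t\}) \, dt, \qquad X \in {\cal O}^\infty,
\end{equation*}
which is well posed because each level set lies in ${\cal E}_U$; the additivity step gives $P(\{X > t\}) = \mu_X((t, \infty))$, and Fubini recovers $\int X \, dP = \int s \, d\mu_X(s) = E[X]$. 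Uniqueness is forced because any representing $P'$ must agree with $P$ on every super-level set of every bounded observable, and these exhaust ${\cal E}$. The main obstacle throughout is the pinching in the additivity step: quasi-linearity only supplies linearity on each ${\cal A}_X$ separately, and it is precisely the device of collapsing two disjoint zero sets onto the two-point spectrum of a single observable $Y = X_A / (X_A + X_B)$ that converts this weak hypothesis into the additivity of the honest Riesz measure $\mu_Y$.
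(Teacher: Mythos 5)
There are two genuine gaps, both at places where the paper has to work hardest. First, your reduction of additivity ``by complementation'' to the case of two disjoint zero sets does not go through. Additivity on ${\cal E}$ must cover disjoint co-zero sets $U \uplus V = W$ (and the mixed case of a zero set and a co-zero set whose union is a zero set), and complementation does not turn these into disjoint unions of zero sets: from $U \uplus V = W$ with all three co-zero one gets $U^c = W^c \uplus V$, i.e.\ a zero set decomposed as a zero set plus a co-zero set --- exactly the mixed case, which under complementation is equivalent to the co-zero case, not to the zero-set case. Since quasi-additive set functions are not modular (the Aarnes example in the last section shows $P$ cannot be extended to a measure), there is no inclusion--exclusion identity to bridge the two classes. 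So your two-point-spectrum trick with $Y = X_A/(X_A+X_B)$ (which is essentially the paper's Urysohn construction) settles only half of axiom (I); the co-zero half is precisely what the paper's Proposition~\ref{PQUA} proves, by inscribing a zero set $F \subset U \uplus V$, splitting $F = (F\cap V^c)\uplus(F\cap U^c)$ with Urysohn functions $Y \preceq U$, $Z \preceq V$, and using the factorization $X = X\cdot Y + X\cdot Z$. Your proposal is silent on this case, and it is needed again later when you assert $P(\{X>t\}) = \mu_X((t,\infty))$ and in the uniqueness argument.

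Second, you repeatedly use monotonicity of $E$ across different observables --- e.g.\ ``$h_n \ge \phi_n(Y)$ then yields $E[h_n] \ge \int \phi_n\, d\mu_Y$,'' and implicitly in the monotone-convergence and layer-cake steps. Monotonicity is not an axiom and does not follow trivially from positivity plus quasi-linearity, because $X \le Y$ does not place $X$ and $Y$ in a common singly generated algebra, so one cannot write $E[Y]-E[X] = E[Y-X] \ge 0$. In the paper this is the content of the Staircase Lemma (decomposing $X = \sum_i X_i$, $Y = \sum_i Y_i$ with $X_i, Y_i \in {\cal A}_{X_i+Y_i}$ and $X_i \le Y_i + \delta/n$), which also yields the Lipschitz estimate $\abs{E[X]-E[Y]} \le \norm{X-Y}$ that you invoke in the pinching. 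Without this lemma (or an equivalent), the inequalities driving your identification $P(A) = \mu_Y(\{0\})$, the convergence $P(U_n) \uparrow P(U)$, and the final comparison $\int X\, dP = E[X]$ are unsupported. The rest of your outline (Riesz--Markov on each ${\cal A}_X$, the sup/inf definition of $P$, the approximation by $\phi_n(Y)$, the layer-cake formula) matches the paper's route, but these two missing ingredients are exactly where the nonlinearity bites, so as written the proof is incomplete.
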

We prove Theorem~\ref{TREP} in section~\ref{SREPR}.
In the final section we prove that the family of measurable functions
on a quasi-measurable space is an algebra of observables,
and give a concrete example on the non-linearity of
observable expectations.


\section{Integration.\label{SINT}}

The purpose of this section is to define the integral 
$\int X(\omega) P (d \omega)$ for certain observables $X$ when
$P$ is an observable probability.
The theory will be an extension of classical probability theory and
at the same time a generalization of the theory of quasi-measures 
\cite{AARNES:QUASI}.
Our proof of the quasi-linearity of the integral is 
much shorter than the original proof in the compact case
\cite[p.46-52]{AARNES:QUASI}.
The original proof uses the theory of the Riemann-Stieltjes integral,
but we get a simpler proof by avoiding this.

The family $\cal E$ of observable sets corresponds to the family of
events in classical probability theory.
The main difference is that $\cal E$ contains two types of events,
and the union of two observable sets of different type need not 
be an observable set.
The following result is as close as we can get to the $\sigma$-algebra axioms.

\begin{lemma}
The families ${\cal E}_U$ and ${\cal E}_F$ of co-zero and zero sets
are closed under finite unions and intersections.
\end{lemma}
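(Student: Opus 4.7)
The plan is to reduce everything to two observations about zero sets, and then pass to co-zero sets by de Morgan's laws. Let $F_1 = X_1^{-1}\{0\}$ and $F_2 = X_2^{-1}\{0\}$ be zero sets, with $X_1, X_2 \in \mathcal{O}$. The key trick is that $\mathcal{O}$ is closed under composition with continuous functions of several variables, so I can manufacture a single observable whose zero set is exactly the desired union or intersection.

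First I would handle unions: apply the continuous function $\phi(x,y) = xy$ to the vector observable $(X_1, X_2)$ to obtain $X_1 X_2 \in \mathcal{O}$, and observe that $(X_1 X_2)(\omega) = 0$ iff $X_1(\omega) = 0$ or $X_2(\omega) = 0$, giving $F_1 \cup F_2 = (X_1 X_2)^{-1}\{0\} \in \mathcal{E}_F$. Next, for intersections, apply $\psi(x,y) = x^2 + y^2$, which is continuous, to get $X_1^2 + X_2^2 \in \mathcal{O}$; since a sum of squares of reals vanishes iff both terms do, $F_1 \cap F_2 = (X_1^2 + X_2^2)^{-1}\{0\} \in \mathcal{E}_F$. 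Iterating each construction gives closure under arbitrary finite unions and intersections.

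For $\mathcal{E}_U$, writing $U_i = F_i^c$ with $F_i \in \mathcal{E}_F$, de Morgan's laws give $U_1 \cup U_2 = (F_1 \cap F_2)^c$ and $U_1 \cap U_2 = (F_1 \cup F_2)^c$. Both right-hand sides are complements of zero sets by the preceding paragraph, hence co-zero sets, and the finite case follows by induction.

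There is no real obstacle here; the one thing to double-check is that the auxiliary functions $\phi$ and $\psi$ qualify under the algebra-of-observables axiom, which requires continuity only on $\mathrm{im}(X_1, X_2)$. Since $xy$ and $x^2+y^2$ are continuous on all of $\mathbb{R}^2$, this restriction is harmless, and the argument works uniformly without any compactness or boundedness assumption on the observables involved.
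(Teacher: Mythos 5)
Your argument is correct and is essentially the paper's proof: the paper also realizes $F_1\cup F_2$ as the zero set of the product $X_1X_2$ and $F_1\cap F_2$ as the zero set of a sum of nonnegative observables (it replaces $X_i$ by $\abs{X_i}$ and uses $X_1+X_2$ where you use $X_1^2+X_2^2$, a cosmetic difference), then passes to ${\cal E}_U$ by complementation exactly as you do.
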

\begin{proof}
Let $F$ and $G$ be zero sets with
$F = X^{-1}\{0\}$ and $G = Y^{-1}\{0\}$.
We may assume that $X,Y \ge 0$ since 
$\phi (t) = \abs{t}$ is continuous.
The observatons
$F \cap G = (X + Y)^{-1} \{0\}$ and
$F \cup G = (X \cdot Y)^{-1} \{0\}$,
and the continuity of 
$\phi (s,t) = s + t$ and 
$\phi (s,t) = s \cdot t$
give that ${\cal E}_F$ is closed under union and intersection.
The claim for ${\cal E}_U$ follows by taking the complement of
the above set equalities. 
\end{proof}

If the algebra of observables $\cal O$ is closed under uniform
convergence,
then ${\cal E}_F$ is closed under countable intersections and
${\cal E}_U$ is closed under countable unions.
This follows from consideration of the function
$X = \sum_{i = 1}^\infty 2^{-i} X_i$.

The family $\cal E$ of observable sets has its name from the
following result.
\begin{lemma}
\label{LOBS}
Let $X$ be an observable.
If the set $F \subset \RealN$ is closed,
then $X^{-1} (F)$ is a zero set.
If the set $U \subset \RealN$ is open,
then $X^{-1} (U)$ is a co-zero set.
\end{lemma}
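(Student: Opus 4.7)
The plan is to produce, for each closed set $F \subset \RealN$, a continuous function $\phi$ whose zero set is exactly $F$, and then pull back along $X$ using the algebra-of-observables axiom. The canonical candidate is the distance function $\phi(t) = \inf_{s \in F} \abs{t - s}$. This $\phi$ is $1$-Lipschitz on $\RealN$, hence continuous, and since $F$ is closed we have $\phi(t) = 0$ if and only if $t \in F$.

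Next I would check that $\phi(X)$ is actually in $\cal O$. Since $X$ is a (single) observable, the relevant condition requires $\phi \in C(\im X)$; but the restriction to $\im X \subset \RealN$ of a globally continuous function is continuous, so the axiom defining an algebra of observables yields $\phi(X) \in \cal O$. Then
\begin{equation}
X^{-1}(F) \;=\; X^{-1}\bigl(\phi^{-1}\{0\}\bigr) \;=\; \bigl(\phi(X)\bigr)^{-1}\{0\},
\end{equation}
which exhibits $X^{-1}(F)$ as the zero set of the observable $\phi(X)$, proving the first claim.

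For the open case, let $U \subset \RealN$ be open. Then $U^c$ is closed, so by the first part $X^{-1}(U^c)$ is a zero set. Since $X^{-1}(U) = \Omega \setminus X^{-1}(U^c)$, the set $X^{-1}(U)$ is the complement of a zero set, hence by definition a co-zero set.

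There is no serious obstacle: the only point requiring any care is that the axiom for an algebra of observables asks $\phi$ to be continuous on $\im X$ (not on all of $\RealN$), but this is automatic from continuity of $\phi$ on $\RealN$. The essence of the argument is simply that the distance-to-a-closed-set function is continuous and vanishes precisely on that set, which is exactly the characterization needed to realize preimages of closed sets as zero sets.
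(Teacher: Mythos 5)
Your proof is correct and follows essentially the same route as the paper: the distance-to-$F$ function (continuous, vanishing exactly on the closed set $F$) composed with $X$ realizes $X^{-1}(F)$ as a zero set, and the open case follows by complementation. No gaps.
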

\begin{proof}
The function $\phi (t) = d (t,F)$ is continuous and $F = \phi^{-1} \{0\}$.
This gives $X^{-1} (F) = \phi(X)^{-1}\{0\}$,
so  $X^{-1} (F)$ is a zero set.
From $X^{-1} (U) = [X^{-1} (U^c)]^c$ we conclude that
 $X^{-1} (U)$ is a co-zero set.
\end{proof}

The key to our definition of the integral is given by:
\begin{lemma}\label{LDIS}
Let $P$ be an observable probability and let $X$ be an observable.
There exists a unique Borel probability $P_X$ such that
$P_X (U) = P \circ X^{-1} (U)$ for all open sets $U \subset \RealN$.
\end{lemma}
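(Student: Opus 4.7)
I would build $P_X$ from its distribution function and then invoke the classical one-to-one correspondence between distribution functions and Borel probability measures on $\RealN$. Since $(-\infty, t]$ is closed, Lemma~\ref{LOBS} gives that $X^{-1}((-\infty, t])$ is a zero set, and axiom (I) applied to the observable partition $\Omega = X^{-1}((-\infty,t]) \uplus X^{-1}((t, \infty))$ shows that
\[
F(t) \dlik P(X^{-1}((-\infty, t])) = 1 - P(X^{-1}((t, \infty))), \qquad t \in \RealN,
\]
is a well-defined $[0,1]$-valued function (using axioms (II) and (III) as well).

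The first substantial step is to verify that $F$ is a distribution function, i.e.\ non-decreasing, right-continuous, with $F(-\infty) = 0$ and $F(+\infty) = 1$. For monotonicity at $s < t$, decompose
\[
X^{-1}((s, \infty)) = X^{-1}((s, t)) \;\uplus\; X^{-1}(\{t\}) \;\uplus\; X^{-1}((t, \infty)),
\]
where Lemma~\ref{LOBS} tells us that the two intervals contribute co-zero sets and the singleton contributes a zero set, so all three pieces lie in ${\cal E}$; axioms (I) and (II) then give $P(X^{-1}((t, \infty))) \le P(X^{-1}((s, \infty)))$, hence $F(s) \le F(t)$. Right-continuity follows from axiom (IV) applied to $X^{-1}((t_n, \infty)) \uparrow X^{-1}((t, \infty))$ when $t_n \downarrow t$; the tail limits follow similarly from $(-n, \infty) \uparrow \RealN$ and $(-\infty, n) \uparrow \RealN$.

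Given a distribution function $F$, the classical Lebesgue--Stieltjes construction produces a unique Borel probability $P_X$ on $\RealN$ with $P_X((a, b]) = F(b) - F(a)$. It remains to verify $P_X(U) = P(X^{-1}(U))$ for every open $U \subset \RealN$. Since every such $U$ is a countable disjoint union of open intervals $\bigsqcup_n I_n$, $\sigma$-additivity of $P_X$ together with axioms (I) and (IV) applied to the increasing finite partial unions of the $X^{-1}(I_n)$ reduce the verification to a single open interval $(a, b)$. There $P_X((a, b)) = F(b^-) - F(a)$, and the one-sided limit $F(b^-)$ is extracted via axiom (IV) applied to $X^{-1}((-\infty, c_n)) \uparrow X^{-1}((-\infty, b))$ for $c_n \uparrow b$; additivity then yields $P_X((a, b)) = P(X^{-1}((a, b)))$. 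Uniqueness is immediate because the open sets form a $\pi$-system generating ${\cal B}(\RealN)$.

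The main obstacle is the monotonicity of $F$ (and the analogous tail arguments), because axiom (IV) naturally gives information about co-zero sets, whereas $F(t)$ is defined through a zero set. The disjoint decomposition above is the key device: it isolates the problematic point $\{t\}$ as a single zero-set piece while keeping the other two pieces in the co-zero family, so that axiom (I) applies directly and no monotonicity of $P$ beyond finite additivity is required.
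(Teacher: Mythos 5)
Your proof is correct and takes essentially the same route as the paper: define $F_X(t) = P(X^{-1}((-\infty,t]))$, verify it is a distribution function using additivity and the monotone convergence axiom on co-zero sets, invoke the Lebesgue--Stieltjes correspondence, check agreement on open intervals via the left limit of $F$, extend to open sets as countable disjoint unions of intervals, and get uniqueness from the $\pi$-system argument — the paper merely leaves the distribution-function checks implicit where you spell them out. One tiny point of hygiene: since $\uplus$ is binary, group your three-piece decomposition as $X^{-1}((s,t)) \uplus X^{-1}([t,\infty))$ with $X^{-1}([t,\infty)) = X^{-1}(\{t\}) \uplus X^{-1}((t,\infty))$, because the alternative intermediate union $X^{-1}((s,t])$ need not belong to ${\cal E}$.
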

\begin{proof}
The family of open sets is closed under finite intersections
and generates the Borel $\sigma$-field.
This gives uniqueness if we can prove existence.
The function $F_X (t) \dlik P \circ X^{-1} (-\infty,t]$
is well defined from Lemma \ref{LOBS}.
Definition \ref{SDEF}.\ref{DPROB} gives that $F_X$
is a right continuous non-decreasing function,
and furthermore that $F_X$ is the distribution function
of a Borel probability $P_X$, 
so $F_X (t) = P_X (-\infty,t]$.
An open set $U$ is a countable disjoint union of open intervals 
so the equality $P_X (U) = P \circ X^{-1} (U)$ follows from
\begin{equation}
P_X (\alpha,\beta) = 
\lim_{\beta_i \uparrow \beta} F_X (\beta_i) - F_X (\alpha) =
P\circ X^{-1} (-\infty,\beta) - P\circ X^{-1} (-\infty,\alpha] =
P\circ X^{-1} (\alpha,\beta) .
\end{equation}
\end{proof}

\begin{defin}
The probability measure $P_X$ in Lemma \ref{LDIS} is the distribution
of $X$.
The observable $X$ is integrable if $\id (t) = t$ is integrable
with respect to $P_X$, and then we define
\begin{equation}\int X (\omega) P (d\omega) \dlik \int t P_X (dt).\end{equation}
\end{defin}

Our next aim is to prove quasi-linearity of the integral.
The following definition is central and 
should be compared with equation (\ref{EINCL}) in section~\ref{SCOMP}.
\begin{defin}
A family $\{Q_X\}, X \in {\cal O},$ of Borel probabilities
on $\RealN$ is consistent if\\ 
$Q_{\phi (X)} (B) = Q_X \circ \phi^{-1} (B)$
for all Borel sets $B \subset \RealN$,
all $X \in {\cal O}$,
and all $\phi \in C (\im X)$.
\end{defin}

Our results imply that there is a one-one correspondence between
observable probabilities and consistent families of Borel 
probabilities.
We state the first half of this claim. 

\begin{lemma}
The family $\{P_X\}, X \in {\cal O},$ of distributions is consistent.
\end{lemma}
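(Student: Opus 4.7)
The plan is to invoke the uniqueness assertion of Lemma~\ref{LDIS}: a Borel probability on $\RealN$ is pinned down by its values on the $\pi$-system of open sets. I view both $B \mapsto P_{\phi(X)}(B)$ and the pushforward $B \mapsto P_X(\phi^{-1}(B))$ as Borel probabilities on $\RealN$, verify agreement on open sets, and then let a standard $\pi$-$\lambda$ (monotone class) argument extend the identity to all Borel $B$.

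For the open-set identity, fix an open $V \subset \RealN$. Continuity of $\phi$ on $\im(X)$ yields an open $W \subset \RealN$ with $\phi^{-1}(V) = W \cap \im(X)$, and since $X(\Omega) \subset \im(X)$ one obtains the preimage identity
\[
(\phi(X))^{-1}(V) \;=\; X^{-1}(\phi^{-1}(V)) \;=\; X^{-1}(W).
\]
Applying Lemma~\ref{LDIS} to the observable $\phi(X) \in {\cal O}$ gives $P_{\phi(X)}(V) = P(X^{-1}(W))$, and applying it to $X$ rewrites the right-hand side as $P_X(W)$. Hence $P_{\phi(X)}(V) = P_X(W)$.

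To close the loop I need $P_X(W) = P_X(\phi^{-1}(V))$. The open set $\overline{\im(X)}^c$ has empty $X$-preimage, so Lemma~\ref{LDIS} forces $P_X(\overline{\im(X)}^c) = 0$; thus $P_X$ is concentrated on $\spec X = \overline{\im(X)}$, and in particular $P_X(W) = P_X(W \cap \overline{\im(X)})$. The delicate point, and what I expect to be the main obstacle, is that $\im(X)$ itself need not be Borel, so $W \cap \im(X)$ is a priori only $P_X$-measurable via the completion, and one must check that the boundary $\overline{\im(X)} \setminus \im(X)$ contributes no mass to $W$. I expect to handle this either by a Tietze-style extension of $\phi$ from $\im(X)$ to $\spec X$, or by writing $W$ as a countable union of open intervals and approximating each boundary sliver from outside by nested open sets to which Lemma~\ref{LDIS} applies. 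The algebraic and topological parts of the argument are routine; the measurability bookkeeping at $\overline{\im(X)} \setminus \im(X)$ is the only genuinely delicate step.
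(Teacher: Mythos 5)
Your core argument coincides with the paper's own proof: the paper simply checks the chain $P_{\phi(X)}(V)=P(\phi(X)^{-1}(V))=P(X^{-1}(\phi^{-1}(V)))=P_X(\phi^{-1}(V))$ on open sets $V$ and invokes the fact that the open sets form a $\pi$-system generating the Borel $\sigma$-field, which is exactly your first two paragraphs. What you add is an explicit treatment of a point the paper passes over in silence, namely that $\phi^{-1}(V)$ is only relatively open in $\im X$, so the final identification really asserts $P_X(W)=P_X(\phi^{-1}(V))$ for an open $W$ with $W\cap\im X=\phi^{-1}(V)$, and indeed even the expression $P_X\circ\phi^{-1}$ is only well defined once one knows that $\im X$ has full outer $P_X$-measure. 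Of your two proposed repairs, drop the Tietze-style one: a function $\phi\in C(\im X)$ need not be uniformly continuous, hence need not extend continuously to $\spec X$ (take $\im X=(0,1]$ and $\phi(t)=\sin(1/t)$). Your second route does close the gap, and can be finished as follows: if $K\subset\RealN$ is compact and disjoint from $\im X$, the open sets $V_i\dlik\{t\st d(t,K)>1/i\}$ give co-zero sets $X^{-1}(V_i)\uparrow X^{-1}(K^c)=\Omega$ in ${\cal E}_U$, so axiom (IV) and Lemma~\ref{LDIS} yield $P_X(V_i)=P(X^{-1}(V_i))\uparrow 1$ and hence $P_X(K)=0$; by inner regularity of the Borel probability $P_X$, every Borel set disjoint from $\im X$ is $P_X$-null, i.e.\ $\im X$ has full outer $P_X$-measure. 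This makes the trace measure, and with it $P_X\circ\phi^{-1}$, well defined, and gives $P_X(W)=P_X(\phi^{-1}(V))$ because two open (or Borel) sets with the same trace on $\im X$ differ by a $P_X$-null set. So your proposal is correct modulo writing out this last step; it is essentially the paper's proof carried out with more care, and the extra care is genuinely needed for the statement to make literal sense.
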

\begin{proof}
The family of open sets is closed under finite intersections and
generates the Borel $\sigma$-field so
$P_{\phi (X)} = P_X \circ \phi^{-1}$
follows from
$P_{\phi (X)} (U) = P \circ \phi(X)^{-1} (U) = 
 P \circ X^{-1} \circ \phi^{-1} (U) = 
P_X \circ \phi^{-1} (U)$.
\end{proof}
Quasi-linearity of the integral $\int X \, dP$ is an imediate consequence of the following Proposition.
\begin{pro}
Let $X \in {\cal O}$ and $\phi \in C (\im X)$.
If $\phi (X)$ is integrable with respect to $P$, then
\begin{equation}
\int \phi(X (\omega)) P(d\omega) = \int \phi (t) P_X (dt) .
\end{equation}
\end{pro}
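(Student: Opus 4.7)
The plan is to reduce the identity to the classical change-of-variables formula for pushforward measures on $\RealN$, using the definition of the integral together with the consistency lemma just established.

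First, by the definition of integration against $P$ applied to the observable $\phi(X) \in {\cal O}^\infty$ (which is assumed $P$-integrable), one has
\[
\int \phi(X(\omega))\, P(d\omega) = \int s\, P_{\phi(X)}(ds).
\]
Second, by the consistency lemma, $P_{\phi(X)} = P_X \circ \phi^{-1}$; that is, $P_{\phi(X)}$ is the pushforward of $P_X$ under $\phi$. Third, I would invoke the standard pushforward change-of-variables formula: for any Borel probability $\mu$ on $\RealN$, any Borel-measurable $\phi$, and any $\mu\circ\phi^{-1}$-integrable Borel function $f$,
\[
\int f(s)\, (\mu\circ \phi^{-1})(ds) = \int f(\phi(t))\, \mu(dt).
\]
Specializing to $\mu = P_X$ and $f(s) = s$ then yields
\[
\int s\, P_{\phi(X)}(ds) = \int \phi(t)\, P_X(dt),
\]
and concatenating the three equalities gives the claim. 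The change-of-variables formula itself is proved in the routine way: it holds for indicators of Borel sets by the very definition of the pushforward, extends to simple functions by linearity, to nonnegative Borel functions by monotone convergence on the real line, and to integrable functions by splitting into positive and negative parts.

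The only real subtlety is that $\phi$ is a priori only continuous on $\im X$, which need not itself be a Borel subset of $\RealN$. This is resolved by observing that $P_X$ is concentrated on $\spec X = \overline{\im X}$: the open set $\RealN \setminus \overline{\im X}$ has empty preimage under $X$, so by Lemma~\ref{LOBS} and Lemma~\ref{LDIS}, $P_X(\RealN \setminus \overline{\im X}) = P(X^{-1}(\RealN \setminus \overline{\im X})) = P(\emptyset) = 0$. Extending $\phi$ by any Borel function on the complement of $\im X$ (say by zero) leaves both sides of the desired identity unchanged: the left-hand side depends only on $\phi$ restricted to $\im X$, and the right-hand side is an integral against $P_X$, whose mass lives inside $\overline{\im X}$. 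With this convention $\phi$ becomes a genuine Borel function on $\RealN$ and the pushforward formula applies directly, completing the argument.
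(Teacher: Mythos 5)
Your proof is correct and is essentially the paper's own argument: the paper's one-line proof is precisely the chain consisting of the definition $\int \phi(X)\,dP = \int t\,P_{\phi(X)}(dt)$, the consistency lemma $P_{\phi(X)} = P_X \circ \phi^{-1}$, and the classical change-of-variables formula for pushforward measures (which the paper simply cites as ``a fundamental result in classical probability theory''). Your extra care about the domain of $\phi$ goes beyond the paper's level of detail and is welcome; the only caveat is that the zero-extension off $\im X$ need not be Borel when $\im X$ itself is not Borel, so one should instead extend $\phi$ continuously to a $G_\delta$ set containing $\im X$ (and then, say, by zero outside that set) before invoking the pushforward formula.
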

\begin{proof}
$\int t P_{\phi (X)} (dt) = 
\int t P_{X} \circ \phi^{-1} (dt) = 
\int \phi (t) P_X (dt)$.
\end{proof}

The last inequality is a fundamental result in classical
probability theory.
The application of this result simplifies the proof
compared too the original one in the compact case \cite{AARNES:QUASI}.
The original proof takes the following Lemma as its starting point.
We need it in order to prove the monotone convergence property of
the integral.
\begin{lemma}
Assume that $X$ is integrable and that $X (\omega) \ge \alpha$
for all $\omega$.
We have
\begin{equation}
\int X (\omega) P (d\omega) =
\alpha + \int_\alpha^\infty P (X > t) dt =
\alpha + \int_\alpha^\infty P (X \ge t) dt .
\end{equation}
\end{lemma}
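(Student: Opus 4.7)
The plan is to pass from the integral on $\Omega$ to the classical Lebesgue integral on $\mathbb{R}$ against the distribution $P_X$, and then apply the standard tail formula for a Borel probability measure.

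First I would invoke the defining equation
$\int X(\omega)\, P(d\omega) = \int t\, P_X(dt)$,
which is meaningful because $X$ is assumed integrable. Since $X(\omega) \ge \alpha$ for all $\omega$, the measure $P_X$ is supported in $[\alpha,\infty)$, so after the change of variable $s = t - \alpha$ (or simply integrating by parts), we obtain the classical identity
\begin{equation*}
\int t\, P_X(dt) \;=\; \alpha + \int_\alpha^\infty P_X\bigl((t,\infty)\bigr)\, dt,
\end{equation*}
which is just the layer-cake / tail formula for a Borel probability with finite mean, applied to the nonnegative variable $\id - \alpha$. This is a standard computation from classical probability theory, so I would not belabor it.

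Next I would identify the tail of $P_X$ with the probabilities computed directly from $P$. By Lemma~\ref{LOBS}, the set $\{X > t\} = X^{-1}((t,\infty))$ is a co-zero set, so $P(X>t)$ is defined, and Lemma~\ref{LDIS} gives $P_X((t,\infty)) = P(X^{-1}((t,\infty))) = P(X>t)$. This already establishes the first of the two equalities.

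For the second equality, I need $\int_\alpha^\infty P(X>t)\, dt = \int_\alpha^\infty P(X\ge t)\, dt$. The set $\{X \ge t\} = X^{-1}([t,\infty))$ is a zero set by Lemma~\ref{LOBS}, so $P(X\ge t)$ is defined. Applying additivity of $P$ to $\Omega = X^{-1}([t,\infty)) \uplus X^{-1}((-\infty,t))$ and similarly for $P_X$, one gets $P(X \ge t) = 1 - P_X((-\infty,t)) = P_X([t,\infty))$. Hence the difference $P(X \ge t) - P(X > t) = P_X(\{t\})$ is the mass of the atom at $t$. Since a finite Borel measure on $\mathbb{R}$ has at most countably many atoms, this difference vanishes outside a Lebesgue null set, and the two Lebesgue integrals in $t$ therefore coincide.

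The main conceptual point — and really the only one requiring care — is the passage from $P \circ X^{-1}$ (defined only on observable subsets of $\mathbb{R}$) to the genuine Borel probability $P_X$; but this is precisely what Lemma~\ref{LDIS} provides, so the argument reduces to classical measure-theoretic manipulations that are already available.
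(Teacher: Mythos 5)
Your argument is correct and follows essentially the same route as the paper: the paper's one-line proof invokes Fubini on the product of $P_X$ with Lebesgue measure, which is exactly the layer-cake/tail identity you use after passing to $P_X$ via the definition of the integral and Lemma~\ref{LDIS}. You merely spell out what the paper compresses, including the harmless point that $P(X\ge t)$ and $P(X>t)$ differ only at the (at most countably many) atoms of $P_X$, hence on a Lebesgue-null set of $t$.
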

\begin{proof}
This follows from an application of the Fubini theorem
on the product measure from $P_X$ and the Lebesgue measure on $\RealN$.
\end{proof}
\begin{pro}
If $X_1 \ge \alpha$, $X$ is integrable, and $X_n \uparrow X$ pointwise, then
\begin{equation}
\int X_n (\omega) P (d\omega)\;\; \uparrow\;\;
\int X (\omega) P (d\omega) .
\end{equation}
\end{pro}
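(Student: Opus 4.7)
The plan is to reduce everything to the tail-integral representation established in the preceding lemma and then invoke the monotone convergence property of $P$ (axiom (IV) of Definition~\ref{DPROB}) together with classical Lebesgue monotone convergence on $\RealN$.

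First I would observe that since $X_n \ge X_1 \ge \alpha$ and $X \ge \alpha$, the hypothesis of the preceding lemma applies to both $X$ and each $X_n$ (integrability of $X_n$ follows from $\alpha \le X_n \le X$ and integrability of $X$, since the distribution function of $X_n$ is dominated termwise by that of $X$ above $\alpha$). Hence
\begin{equation}
\int X_n \, dP = \alpha + \int_\alpha^\infty P(X_n > t)\, dt, \qquad
\int X \, dP = \alpha + \int_\alpha^\infty P(X > t)\, dt.
\end{equation}

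Next I would analyze the sets $\{X_n > t\} = X_n^{-1}((t,\infty))$. By Lemma~\ref{LOBS} each such set is a co-zero set, and pointwise monotonicity $X_n \uparrow X$ gives $\{X_n > t\} \subset \{X_{n+1} > t\}$ with union equal to $\{X > t\}$ (if $X(\omega) > t$ then eventually $X_n(\omega) > t$). Applying the monotone convergence axiom (IV) of Definition~\ref{DPROB} to this increasing sequence in ${\cal E}_U$ yields $P(X_n > t) \uparrow P(X > t)$ for every $t > \alpha$.

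Finally, since $t \mapsto P(X_n > t)$ is a non-negative increasing sequence of functions converging pointwise to $t \mapsto P(X > t)$, the classical monotone convergence theorem for the Lebesgue integral on $(\alpha,\infty)$ gives $\int_\alpha^\infty P(X_n > t)\, dt \uparrow \int_\alpha^\infty P(X > t)\, dt$, and adding $\alpha$ concludes the proof. The main subtlety is the passage from pointwise monotone convergence of the $X_n$ to monotone convergence in ${\cal E}_U$ of the associated strict super-level sets, which is precisely what Lemma~\ref{LOBS} and axiom (IV) are designed to deliver.
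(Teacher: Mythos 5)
Your proposal is correct and follows essentially the same route as the paper: the tail-integral formula from the preceding lemma, axiom (IV) applied to the increasing co-zero sets $\{X_n > t\}\uparrow\{X > t\}$ to get $P(X_n>t)\uparrow P(X>t)$, and then the classical Lebesgue monotone convergence theorem on $(\alpha,\infty)$. Your extra remark justifying integrability of each $X_n$ by domination is a small refinement the paper leaves implicit, not a change of method.
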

\begin{proof}
For each $t$ axiom {(IV)} in Definition~\ref{DPROB} of $P$
gives $P (X_n > t) \uparrow P (X > t)$.
The claim is then a consequence of the previous Lemma and
the Lebesgue monotone convergence theorem. 
\end{proof}

In particular we have now proven that integration with respect to
an observable probability gives an observable expectation.

\section{Representation of observable expectations.\label{SREPR}}

In this section we will prove that any observable expectation $E$
on an algebra $\cal O$ of observables is represented as the integral
$E [X] = \int X\; dP$ with respect to 
a unique observable probability $P$.
The path is given by proving:
(i) Monotonicity and uniform continuity of $E$.
(ii) An observable probability $P$ is determined by its integral
$\int X \; dP$.
(iii) An observable probability $P$ is determined by $E$.
(iv) The functional $E$ is equal to the functional obtained by
integration with respect to $P$.
 
The following Staircase Lemma is fundamental.
The statement is new, 
but the proof is very close to the proof of a corresponding
result by Aarnes \cite[p.54]{AARNES:QUASI}.

\begin{lemma}
Let $X \le Y$ in ${\cal O}^\infty$.
For each $\delta > 0$
we have the decomposition
$X = X_1 + \cdots + X_n$,
$Y = Y_1 + \cdots + Y_n$,
$X_i \in {\cal A}_X  \cap  {\cal A}_{X_i + Y_i}$,
$Y_i \in {\cal A}_Y  \cap  {\cal A}_{X_i + Y_i}$,
and 
$X_i \le Y_i + \delta / n$.
\end{lemma}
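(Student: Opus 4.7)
\emph{Plan.} The approach is a staircase decomposition in the spirit of Aarnes~\cite{AARNES:QUASI}. Fix an interval $[a, b]$ containing both $\spec X$ and $\spec Y$ (possible since $X, Y \in {\cal O}^\infty$) and a partition $a = t_0 < t_1 < \cdots < t_n = b$ whose mesh is controlled by $\delta$. Introduce the continuous piecewise-linear ``staircase'' functions
\begin{equation*}
\phi_i(t) \;=\; \max\!\bigl(0,\; \min(t - t_{i-1},\; t_i - t_{i-1})\bigr),
\end{equation*}
so each $\phi_i$ is non-decreasing with range $[0, t_i - t_{i-1}]$ and $\sum_i \phi_i(t) = t - a$ on $[a, b]$. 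Define $X_i := \phi_i(X)$ for $i \ge 2$ and $X_1 := \phi_1(X) + a$ to absorb the constant, and analogously $Y_i$. Then $\sum X_i = X$ and $\sum Y_i = Y$ hold by construction, while $X_i \in {\cal A}_X$ and $Y_i \in {\cal A}_Y$ follow from the continuity of each $\phi_i$.

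The inequality $X_i \le Y_i + \delta/n$ is almost automatic: monotonicity of $\phi_i$ turns $X \le Y$ into $\phi_i(X) \le \phi_i(Y)$, so in fact $X_i \le Y_i$. The $\delta/n$ slack is a cushion left for the harder membership condition below.

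The main obstacle is verifying $X_i, Y_i \in {\cal A}_{X_i + Y_i}$, i.e.\ producing a continuous $f_i$ on $\spec(X_i + Y_i)$ with $X_i = f_i(X_i + Y_i)$. The naive uniform staircase does not suffice: on the set where both $X(\omega)$ and $Y(\omega)$ lie in $(t_{i-1}, t_i)$, a single value of $X_i + Y_i$ can correspond to several distinct values of $X_i$. I would remedy this by not using the same $\phi_i$ for both $X$ and $Y$: replace the $Y$-side staircase by a sibling function $\psi_i$ staggered relative to $\phi_i$, arranged so that within each index $i$ the sets $\{X_i \ne 0\}$ and $\{Y_i \ne 0\}$ are essentially separated at the $\omega$-level, and hence the sum $X_i + Y_i$ canonically breaks the degeneracy and continuously determines each summand. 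The $\delta/n$ cushion is exactly what permits this staggering while keeping the inequality $X_i \le Y_i + \delta/n$ intact.

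Assembling the pieces then reduces to checking the partition-of-unity identity for the combined $\{\phi_i, \psi_i\}$, reading off the algebra memberships from the continuity of the staircase maps, and using the staggering together with $X \le Y$ to secure the inequality. Choosing $n$ large enough so that the effective mesh is at most $\delta/n$ delivers the claimed bound and completes the decomposition.
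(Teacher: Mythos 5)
Your scaffolding (staircase functions $\phi_i$, staggering one staircase relative to the other by an amount tied to $\delta$, mesh control) is exactly the route the paper takes, but the decisive step --- proving $X_i, Y_i \in {\cal A}_{X_i+Y_i}$ --- is precisely the part you leave as a sketch, and the mechanism you sketch is not the one that works. You propose to arrange the stagger so that ``the sets $\{X_i \ne 0\}$ and $\{Y_i \ne 0\}$ are essentially separated at the $\omega$-level.'' Disjointness of supports does not let you recover a summand continuously from the sum: if $X_i(\omega) = v > 0$ with $Y_i(\omega) = 0$ and $Y_i(\omega') = v$ with $X_i(\omega') = 0$, then $X_i + Y_i$ takes the same value $v$ at $\omega$ and $\omega'$ while $X_i$ takes the values $v$ and $0$, so no $f \in C(\spec(X_i+Y_i))$ can satisfy $X_i = f(X_i+Y_i)$; and since $X \le Y$ forces the two staircase pieces to have overlapping ranges, this obstruction is generic. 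The correct mechanism is saturation, not separation: shift the $Y$-side up by $\delta$ (i.e.\ work with $\tilde X = X + M$, $\tilde Y = Y + M + \delta$, so $\tilde X \le \tilde Y - \delta$) and take the partition mesh strictly less than $\delta$. Then $\tilde X_i(\omega) := \phi_i(\tilde X)(\omega) > 0$ forces $\tilde Y_i(\omega) := \phi_i(\tilde Y)(\omega)$ to equal its maximal value $c_i = \beta_i - \beta_{i-1}$, i.e.\ $\tilde X_i \cdot (c_i - \tilde Y_i) = 0$. Writing $S = \tilde X_i + \tilde Y_i$ this gives $\tilde X_i = \max(S - c_i, 0)$ and $\tilde Y_i = \min(S, c_i)$, both continuous functions of $S$, which is what puts $X_i$ and $Y_i$ in ${\cal A}_{X_i+Y_i}$ (note that on $\{\tilde X_i > 0\}$ one has $\tilde Y_i = c_i \neq 0$, so the two supports are nested, not separated).

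Two further bookkeeping points. First, the $\delta/n$ slack does not come from the mesh: it comes from distributing the global shift, $X_i = \phi_i(\tilde X) - M/n$ and $Y_i = \phi_i(\tilde Y) - (M+\delta)/n$, so that $\phi_i(\tilde X) \le \phi_i(\tilde Y)$ yields $X_i \le Y_i + \delta/n$ while $\sum_i X_i = X$ and $\sum_i Y_i = Y$. Second, your closing requirement that the mesh be at most $\delta/n$ is circular: with spectra inside an interval of length $L$, a partition into $n$ pieces has mesh about $L/n$, and $L/n \le \delta/n$ would force $L \le \delta$. The correct quantitative condition is simply mesh $< \delta$, with the same $n$ serving as the number of summands. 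With these repairs your argument becomes the paper's proof; as it stands, the key membership $X_i, Y_i \in {\cal A}_{X_i+Y_i}$ is unproven and the separation heuristic would fail.
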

\begin{proof}
Choose a constant $M$ such that
$\tilde{X} \dlik X + M$, 
$\tilde{Y} \dlik Y + M + \delta$ obeys
$0 \le \tilde{X} \le \tilde{Y} - \delta$.
Choose $0 = \beta_0 < \cdots < \beta_n = \beta \dlik \norm{\tilde{Y}}$, 
$\beta_{i + 1} - \beta_i < \delta$, and define
\begin{equation}
\phi (x) \dlik \left\{
\begin{array}{ll}
0 & x \le 0\\
x & 0 \le x \le \beta\\
\beta & x \ge \beta
\end{array}
 \right. , \;\;\;\;\;\;
\phi_i (x) \dlik \left\{
\begin{array}{ll}
0 & x \le \beta_{i - 1}\\
x -  \beta_{i - 1}  &  \beta_{i - 1}  \le x \le \beta_i \\
\beta_i - \beta_{i - 1} & x \ge \beta_i
\end{array}
 \right. .
\end{equation}
With $X_i \dlik \phi_i (\tilde{X}) - M/n$, 
$Y_i \dlik \phi_i (\tilde{Y}) - (M + \delta)/n$,
and the observation $\phi = \sum_i \phi_i$, 
we conclude $X = \sum_i X_i$, $Y = \sum_i Y_i$,
and $X_i \in {\cal A}_{X}, Y_i \in {\cal A}_{Y}$.
We prove $X_i, Y_i \in {\cal A}_{X_i + Y_i}$,
or equivalently  
$\tilde{X}_i \dlik  \phi_i (\tilde{X}), 
\tilde{Y}_i \dlik  \phi_i (\tilde{Y}) \in {\cal A}_{\tilde{X}_i + \tilde{Y}_i}$.
From $\tilde{X} \le \tilde{Y} - \delta$ 
we conclude $\tilde{Y}_i (x) = \beta_i - \beta_{i - 1}$
when $\tilde{X}_i (x) > 0$.
This gives $\tilde{X}_i \cdot (\beta_i - \beta_{i - 1} - \tilde{Y}_i) = 0$,
$\tilde{X}_i, \beta_i - \beta_{i - 1} - \tilde{Y}_i 
\in {\cal A}_{\tilde{X}_i - \beta_i + \beta_{i - 1} + \tilde{Y}_i}$, 
and finally
$\tilde{X}_i, \tilde{Y}_i \in {\cal A}_{\tilde{X}_i + \tilde{Y}_i}$.
\end{proof}

\begin{pro}
If $E : {\cal O}^\infty \into \RealN$ is positive
and quasi-linear, then
$X \le Y \imply E [X]    \le E [Y]   $, and
$\abs{E [X]    - E [Y]   } \le E [1]      \norm{X - Y}$.
\end{pro}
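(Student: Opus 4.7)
The plan is to derive monotonicity directly from the Staircase Lemma and then read off the Lipschitz estimate as a short consequence.

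For monotonicity, fix $X \le Y$ in ${\cal O}^\infty$ and an arbitrary $\delta > 0$. Apply the Staircase Lemma to obtain decompositions $X = X_1 + \cdots + X_n$, $Y = Y_1 + \cdots + Y_n$ with $X_i \in {\cal A}_X$, $Y_i \in {\cal A}_Y$, both living in the common algebra ${\cal A}_{X_i + Y_i}$, and satisfying $X_i \le Y_i + \delta/n$. Linearity of $E$ on ${\cal A}_X$ and on ${\cal A}_Y$ gives $E[X] = \sum_i E[X_i]$ and $E[Y] = \sum_i E[Y_i]$. The constant $1$ belongs to every ${\cal A}_Z$ (take $\phi \equiv 1$), so the observable $Y_i - X_i + \delta/n$ lies in ${\cal A}_{X_i + Y_i}$ and is pointwise nonnegative; positivity together with linearity on this shared algebra yields $E[Y_i] - E[X_i] + (\delta/n)E[1] \ge 0$. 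Summing over $i$ gives $E[Y] - E[X] \ge -\delta E[1]$, and letting $\delta \downarrow 0$ produces $E[X] \le E[Y]$.

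For the Lipschitz bound, set $c \dlik \norm{X - Y}$, so that $Y - c \le X \le Y + c$ pointwise. The constant $c$ belongs to ${\cal A}_Y$ via the continuous map $\phi(t) = t + c$, so linearity on ${\cal A}_Y$ gives $E[Y \pm c] = E[Y] \pm c E[1]$. Feeding the two inequalities into the monotonicity just established yields $E[Y] - cE[1] \le E[X] \le E[Y] + cE[1]$, which is exactly $\abs{E[X] - E[Y]} \le E[1]\,\norm{X - Y}$.

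There is no serious obstacle: the Staircase Lemma is engineered precisely so that the pieces $X_i$ and $Y_i$ share a common algebra ${\cal A}_{X_i + Y_i}$ in which quasi-linearity becomes genuine linearity and can be combined with the pointwise bound $X_i \le Y_i + \delta/n$ through positivity. The only point demanding discipline is to invoke linearity in the \emph{correct} algebra at each step: ${\cal A}_X$ for splitting $E[X]$, ${\cal A}_Y$ for splitting $E[Y]$, and ${\cal A}_{X_i + Y_i}$ for comparing the two; if one tries to add $E[X_i]$ and $E[Y_i]$ without routing through this shared algebra, quasi-linearity gives nothing.
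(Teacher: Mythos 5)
Your argument is correct and is essentially the paper's own proof: the Staircase Lemma combined with positivity and linearity on the shared algebras ${\cal A}_{X_i + Y_i}$ yields $E[X] \le E[Y] + \delta E[1]$ and hence monotonicity, and the Lipschitz bound then follows from $Y - \norm{X-Y} \le X \le Y + \norm{X-Y}$ together with linearity on ${\cal A}_Y$. Your write-up simply makes explicit the step $E[X_i] \le E[Y_i] + (\delta/n)E[1]$ via the common algebra, which the paper leaves implicit.
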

\begin{proof}
The Staircase Lemma gives
$E[X] = \sum_i E[X_i] 
\le \delta E[1]   + \sum_i E[Y_i] = \delta E[1]   + E[Y]$
from which we conclude $E[X] \le E[Y]$.
The observation $X \le Y + \norm{X - Y}$ 
gives $E[X] \le E[Y] + E[1  ] \norm{X - Y}$
and a switch of $Y$ and $X$ gives
$\abs{E[X] - E[Y]} \le E[1  ] \norm{X - Y}$.
\end{proof}

It follows in particular that an observable expectation is monotonic and continuous.
The next aim is to prove that an observable probability is
determined by its integral.
We need some preliminaries.

Let $A$ be a subset of $\Omega$ and $X$ be a function on $\Omega$.
If $X \le 1_A$, we write $X \le A$.
The inequality
$A \le X$ is interpreted in a similar way by replacing the set $A$ with its
indicator function $1_A$.
The notation $X \preceq A$ means that there exists a zero set $F$
such that $X \le F \subset A$.
The proof of the following Urysohn Lemma is the only place where we
really need that $\cal O$ is something less general than an algebra of
functions.

\begin{lemma}\label{LURY}
If $F$ is a zero set contained in a co-zero set $U$, 
there exists an observable $X$ such that $F \le X \preceq U$.
\end{lemma}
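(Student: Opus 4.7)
The plan is to reduce the problem to constructing a single continuous function $\phi$ of two non-negative observables whose zero sets are $F$ and $U^c$. Write $F = Y^{-1}\{0\}$ and $U^c = Z^{-1}\{0\}$ with $Y, Z \in \cal O$. Since $t\mapsto \abs{t}$ is continuous, we may replace $Y$ and $Z$ by $\abs Y$ and $\abs Z$, so assume $Y, Z \ge 0$ (this is where we use that $\cal O$ is an algebra of observables and not merely an algebra of functions). The key geometric fact is that $F \subset U$ translates into $F \cap U^c = \emptyset$, i.e.\ the vector observable $(Y,Z)$ never hits the origin. Hence its image $\im (Y,Z)$ is contained in $\{(y,z)\in\RealN^2 : y+z > 0\}$.

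Next I would define
\begin{equation}
\phi(y,z) \dlik \frac{(z-y)^+}{y+z},
\end{equation}
which is continuous on $\{y+z>0\}$ and hence on $\im(Y,Z)$. Setting $X \dlik \phi(Y,Z)$ gives an element of $\cal O$ with $0 \le X \le 1$. I would then check $F \le X$ directly: on $F$ we have $Y=0$ and $Z>0$ (because $F\subset U$), so $X = Z/Z = 1$; elsewhere $X\ge 0 = 1_F$.

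For $X \preceq U$, the natural candidate zero set is $G \dlik \{Z \ge Y\}$. To see $G$ is a zero set, observe $G = \psi(Y,Z)^{-1}\{0\}$ for the continuous function $\psi(y,z) = (y-z)^+$, so $G \in {\cal E}_F$ by definition. For $G \subset U$: any $\omega \in G$ has $Z(\omega) \ge Y(\omega) \ge 0$, and $Y(\omega)+Z(\omega)>0$ forces $Z(\omega)>0$, i.e.\ $\omega \in U$. Finally $X \le 1_G$: when $\omega \notin G$ we have $Z<Y$, so the numerator $(Z-Y)^+$ vanishes and $X(\omega)=0$; when $\omega \in G$ we have $(Z-Y)^+ \le Z \le Y+Z$, so $X(\omega)\le 1$. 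Together these give $F \le X \le 1_G \le 1_U$ with $G$ a zero set contained in $U$, which is the required statement.

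The only real obstacle is choosing a $\phi$ that simultaneously satisfies three constraints: $\phi=1$ when $y=0$ (so $X=1$ on $F$), $\phi=0$ when $z=0$ (so $X$ vanishes on $U^c$), and, less obviously, $\phi$ vanishes on a whole neighbourhood-in-image of the set $\{z=0\}$ so that $\{X>0\}$ lies strictly inside a zero set contained in $U$. A ``naive'' choice such as $z/(y+z)$ handles the first two constraints but fails the third, since its positivity set is precisely $U$, which is a co-zero set rather than a zero set. The truncation built into $(z-y)^+/(y+z)$ is what resolves this, at the small cost of forcing the intermediate zero set $G = \{Z\ge Y\}$ to be identified explicitly.
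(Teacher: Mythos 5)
Your proof is correct, and it rests on the same key mechanism as the paper's: since $F \subset U$, the zero sets of $Y$ and $Z$ are disjoint, so $Y+Z>0$ on all of $\Omega$, the image of the vector-observable $(Y,Z)$ lies in $\{y+z>0\}$, and any function continuous there can be applied to $(Y,Z)$ by the algebra-of-observables axiom. Where you genuinely differ is in how you upgrade $\le U$ to $\preceq U$. The paper takes the plain quotient $\phi(y,z)=z/(y+z)$, which only yields $F \le \tilde{X} \le U$ (exactly the defect you identify: the positivity set of $\tilde{X}$ is all of $U$), and then sandwiches $F \subset V \subset G \subset U$ with $V = \tilde{X}^{-1}(1/2,1]$ a co-zero set and $G = \tilde{X}^{-1}[1/2,1]$ a zero set, and finally repeats the whole construction for the pair $F \subset V$ to obtain $X$ with $F \le X \le V \subset G$. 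Your truncated quotient $(z-y)^+/(y+z)$ builds the cutoff into $\phi$ itself, so a single application suffices, with the intermediate zero set exhibited explicitly as $G = \{Z \ge Y\}$, the zero set of $(Y-Z)^+ \in {\cal O}$; your verifications that $G \subset U$ (using $Y,Z \ge 0$ and $Y+Z>0$), that $X=1$ on $F$, and that $X \le 1_G$ are all sound. What your route buys is the elimination of the bootstrapping step; what the paper's buys is that the same two-line construction is simply invoked twice, and its level-set picture ($F \subset V \subset G \subset U$) is the template reused later for regularized limits. Either argument proves the lemma.
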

\begin{proof}
Let $F = Y^{-1} \{0\}$ and $U^c = Z^{-1} \{0\}$ with $Y,Z \ge 0$.
The equality $\phi (y,z) = z / (y + z)$ defines a function
$\phi \in C (\im (Y,Z))$,
so $\tilde{X} \dlik \phi(Y,Z) \in {\cal O}$.
It follows that 
$\tilde{X}^{-1} \{1\} = F$,
$\tilde{X}^{-1} \{0\} = U^c$, and in particular
$F \le \tilde{X} \le U$.
With $V \dlik \tilde{X}^{-1} (1/2,1]$ and
$G \dlik \tilde{X}^{-1} [1/2,1]$ we conclude
$F \subset V \subset G \subset  U$.
A repetition of the above argument gives an observable $X$
with $F \le X \le V$ which gives the claim.
\end{proof}

A particular consequence is that disjoint zero sets are separated by
(disjoint) co-zero sets.
The monotonicity property of $P$ is extended by:
\begin{lemma}\label{LMON}
We have $P (F) \le \int X \;dP \le P (U)$ if $F \le X \le U$.
\end{lemma}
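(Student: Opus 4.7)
The plan is to invoke the layer-cake representation
\[
\int X\, dP \;=\; \int_0^\infty P(X > t)\, dt
\]
from the preceding Lemma in this section. The hypothesis $F \le X \le U$ pins $X$ to $[0,1]$ (with $X \equiv 1$ on $F$ and $X \equiv 0$ off $U$), so $X$ is automatically integrable and $P(X > t) = 0$ for $t \ge 1$; the integral collapses to $\int_0^1 P(X > t)\, dt$. The whole proof then reduces to establishing the sandwich $P(F) \le P(X > t) \le P(U)$ for every $t \in (0,1)$ and integrating.

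The set inclusions $F \subset \{X > t\} \subset U$ for $0 < t < 1$ are immediate from the pointwise values of $X$, and Lemma \ref{LOBS} identifies each $\{X > t\} = X^{-1}(t,\infty)$ as a co-zero set. The delicate point is that $P$ is not automatically monotone on $\mathcal{E}$, because the difference of two observable sets need not itself be observable; the plan is therefore to handle the two inequalities by separate mechanisms tailored to the type-mixture involved.

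For the lower inequality $P(F) \le P(X > t)$, the difference $\{X > t\}\setminus F = \{X > t\}\cap F^c$ is an intersection of two co-zero sets, hence a co-zero set by the first Lemma of Section \ref{SINT}; so $F \uplus (\{X > t\}\setminus F) = \{X > t\}$ is a legitimate decomposition inside $\mathcal{E}$, and additivity together with positivity of $P$ (axioms (I) and (II) of Definition \ref{DPROB}) delivers the bound. For the upper inequality $P(X > t) \le P(U)$, both sets lie in $\mathcal{E}_U$; I would apply axiom (IV) of Definition \ref{DPROB} to the chain $U_1 = \{X > t\} \subset U_2 = U_3 = \cdots = U$, which produces a non-decreasing sequence $P(U_i)$ converging to $P(U)$ and thus forces $P(\{X > t\}) \le P(U)$.

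Integrating the resulting inequality $P(F) \le P(X > t) \le P(U)$ against Lebesgue measure on $(0,1)$ yields the claim. The main obstacle is conceptually minor but structurally unavoidable: the two sides of the sandwich require genuinely different arguments because the two-type structure of $\mathcal{E}$ destroys the naive monotonicity of a measure, and one must route around it either by an additive decomposition that stays within a single type or by invoking the monotone convergence axiom on an eventually constant chain.
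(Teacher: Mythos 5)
Your argument is correct, but it is organized quite differently from the paper's. The paper disposes of the lemma in a single chain working directly with the distribution $P_X$, which is carried by $[0,1]$: it squeezes $\int X\,dP=\int_{(0,1]}t\,P_X(dt)$ between $P_X\bigl((0,1]\bigr)=P(X>0)\le P(U)$ and $P_X(\{1\})=P\bigl(X^{-1}\{1\}\bigr)\ge P(F)$, leaving the two set comparisons (monotonicity across the inclusions $\{X>0\}\subset U$ and $F\subset X^{-1}\{1\}$) implicit. You instead invoke the layer-cake lemma of Section~\ref{SINT} and integrate a $t$-wise sandwich $P(F)\le P(X>t)\le P(U)$, $0<t<1$. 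That is a legitimate alternative, and it has the virtue of making explicit precisely the additivity bookkeeping the paper suppresses: your lower-bound decomposition $\{X>t\}=F\uplus\bigl(\{X>t\}\cap F^c\bigr)$, with the second piece co-zero, is exactly the kind of mixed-type $\uplus$-argument the paper itself uses later (compare $U=F\uplus(U\cap F^c)$ in the proof of Proposition~\ref{PQUA}). The cost is that you lean on the Fubini-based representation where the paper needs only the definition $\int X\,dP=\int t\,P_X(dt)$.

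One step deserves a remark. For the upper bound you read axiom (IV) of Definition~\ref{DPROB}, applied to the eventually constant chain $\{X>t\}\subset U\subset U\subset\cdots$, as asserting that the numerical sequence $P(U_i)$ is non-decreasing, so that $P(X>t)\le P(U)$. That reading is consistent with the paper's use of the symbol $\uparrow$ (monotonicity is part of its meaning for functions and sets), so the step can be defended; but it extracts two-set monotonicity from what is intended as a continuity axiom, and a reader parsing (IV) as merely $\lim_iP(U_i)=P(U)$ would reject it. A more robust argument, symmetric to your own lower bound and using only axioms (I)--(II), is to interpose the zero set $\{X\ge t\}$: from $U=\{X\ge t\}\uplus\bigl(U\cap\{X<t\}\bigr)$ and $\{X\ge t\}=\{X>t\}\uplus\{X=t\}$ (all pieces observable by Lemma~\ref{LOBS} and the closure lemma of Section~\ref{SINT}) one gets $P(X>t)\le P(\{X\ge t\})\le P(U)$ directly. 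With that substitution your proof is airtight and arguably more self-contained than the paper's.
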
 
\begin{proof}
$P (U) \ge P (X > 0) = \int_{(0,1]} P_X (dt) \ge
\int_{(0,1]} t P_X (dt) \ge 
\int_{\{1\}} P_X (dt) \ge P (F)$.
\end{proof}

We write $U_i \uparrow_r U$ if $U_i \preceq U_{i + 1}$ and
$U = \cup_i U_i$ in ${\cal E}_U$, and in this case we say
that $U$ is a regularized limit of the sequence $(U_i)$.
\begin{lemma}
Every co-zero set $U$ is a regularized limit of a sequence $(U_i)$ of
co-zero sets. 
\end{lemma}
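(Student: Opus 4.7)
The plan is to represent $U$ as an increasing union of co-zero sets of the form $\{Z > 1/i\}$, where $Z$ is the observable whose zero set is $U^c$, and separate consecutive terms by the zero sets $\{Z \ge 1/i\}$.

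More precisely, since $U \in {\cal E}_U$, its complement is a zero set, so there exists an observable $Z$ with $U^c = Z^{-1}\{0\}$, and we may assume $Z \ge 0$ (replace $Z$ by $|Z|$, which lies in ${\cal O}$ by continuity of the absolute value). For each $i \ge 1$ define
\begin{equation}
U_i \dlik Z^{-1}(1/i, \infty), \qquad F_i \dlik Z^{-1}[1/i, \infty).
\end{equation}
By Lemma~\ref{LOBS}, $U_i$ is a co-zero set and $F_i$ is a zero set, since $(1/i, \infty)$ is open and $[1/i, \infty)$ is closed in $\RealN$.

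The inclusions $U_i \subset F_i \subset U_{i+1}$ hold trivially because $Z > 1/i$ implies $Z \ge 1/i$, and $Z \ge 1/i > 1/(i+1)$ implies $Z > 1/(i+1)$. Consequently $1_{U_i} \le 1_{F_i}$ with $F_i$ a zero set contained in $U_{i+1}$, which is exactly the relation $U_i \preceq U_{i+1}$. Finally, since $Z \ge 0$ and $U^c = \{Z = 0\}$, we have $U = \{Z > 0\} = \bigcup_i \{Z > 1/i\} = \bigcup_i U_i$, so $U_i \uparrow_r U$.

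I do not expect any real obstacle here: the only substantive ingredient is Lemma~\ref{LOBS}, which identifies preimages of open and closed subsets of $\RealN$ under an observable as co-zero and zero sets respectively. The rest is a straightforward choice of a strictly decreasing sequence of thresholds that separates the co-zero sets by zero sets.
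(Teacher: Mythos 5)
Your proof is correct and follows essentially the same route as the paper: write $U = \{Z > 0\}$ for a nonnegative observable $Z$ with $U^c = Z^{-1}\{0\}$, take $U_i = Z^{-1}(1/i,\infty)$, and separate consecutive terms by the zero sets $F_i = Z^{-1}[1/i,\infty)$. The only difference is that you spell out the reduction to $Z \ge 0$ and the appeal to Lemma~\ref{LOBS}, which the paper leaves implicit.
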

\begin{proof}
We may assume that $U = X^{-1} (0,\infty)$, $X \ge 0$.
By defining $U_i \dlik X^{-1} (1/i,\infty)$ and
$F_i \dlik X^{-1} [1/i,\infty)$,
we get $U_i \subset F_i \subset U_{i + 1}$ and $U = \cup_i U_i$.
\end{proof}

We are now in a position to prove that any observable probability
is a quasi-additive probability, 
and that the integral determines
the probability:

\begin{pro}\label{PPRO}
If $U$ is a co-zero set, then
\begin{equation}
P (U) = \sup \{P (F) \st F \subset U,\; F \in {\cal E}_F\} =
\sup_{X \preceq U} \int X\;dP
.\end{equation} 
\end{pro}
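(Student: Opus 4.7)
The plan is to prove equality of the three quantities by establishing the chain of inequalities $P(U) \le \alpha \le \beta \le P(U)$, where $\alpha$ denotes the supremum over zero subsets and $\beta$ denotes the supremum of integrals over $X \preceq U$.

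First I would dispose of the easy inequalities. For $\beta \le P(U)$: if $X \preceq U$ then $X \le F' \subset U$ for some zero set $F'$, so in particular $X \le 1_U$, and Lemma~\ref{LMON} directly yields $\int X\, dP \le P(U)$. For $\alpha \le \beta$: given any zero set $F \subset U$, the Urysohn Lemma (Lemma~\ref{LURY}) supplies an observable $X$ with $F \le X \preceq U$, and Lemma~\ref{LMON} gives $P(F) \le \int X\, dP$, hence $P(F) \le \beta$. Taking the supremum over $F$ yields $\alpha \le \beta$.

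The main work is the reverse inequality $P(U) \le \alpha$, which requires axiom (IV) together with the regularization lemma just established. By that lemma there is a sequence of co-zero sets with $U_i \uparrow_r U$, so that each $U_i \preceq U_{i+1}$ furnishes a zero set $F_i$ with $U_i \subset F_i \subset U_{i+1} \subset U$. I need to show $P(F_i) \ge P(U_i)$. Since $F_i \setminus U_i = F_i \cap U_i^c$ is the intersection of two zero sets it is a zero set, so the decomposition $F_i = U_i \uplus (F_i \setminus U_i)$ lies in $\mathcal{E}$, and additivity together with positivity yields $P(F_i) \ge P(U_i)$. Axiom (IV) now gives $P(U_i) \uparrow P(U)$, and since $F_i \subset U$ is a zero set with $P(F_i) \ge P(U_i)$, taking the limit shows $\alpha \ge P(U)$.

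The main obstacle is making sure that each intermediate set in the regularization is genuinely observable so that additivity can be invoked, i.e.\ verifying that $F_i\setminus U_i$ and $U \setminus F$ lie in $\mathcal{E}$; this rests on the earlier lemma that $\mathcal{E}_F$ and $\mathcal{E}_U$ are each closed under finite intersections. Once these manipulations are justified, the theorem follows by assembling the three inequalities. A minor subtlety worth checking is the interpretation of $U_i \preceq U_{i+1}$ for co-zero sets $U_i$: one reads this as $1_{U_i} \preceq U_{i+1}$, which is exactly the inclusion through a zero set that the regularization lemma produces.
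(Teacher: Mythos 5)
Your proof is correct and follows essentially the same route as the paper: regularize $U$ by co-zero sets $U_i \subset F_i \subset U_{i+1}$, use axiom (IV) to get $P(U_i) \uparrow P(U)$, and close the argument with Lemmas \ref{LMON} and \ref{LURY}. The only difference is cosmetic: you arrange it as a cyclic chain of inequalities and spell out, via additivity of $P$ on $F_i = U_i \uplus (F_i \cap U_i^c)$ together with positivity, the monotonicity step $P(U_i) \le P(F_i)$ that the paper's proof states tersely.
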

\begin{proof}
Let $U_i \uparrow_r U$ with $U_i \subset F_i \subset U_{i + 1}$.
By definition $P (U_i) \uparrow P (U)$ and the monotonicity
gives $P (F_i) \uparrow P (U)$ and the first equality in the claim.
The second equality in the claim follows from this and
Lemmas \ref{LMON} and \ref{LURY}. 
\end{proof}

We will now prove that a quasi-additive expectation $E$
determines a quasi-additive probability.
This result is more general than we need,
but we include the statement since its a first step in a
proof of a representation theorem for quasi-additive expectations
by integration.

\begin{pro}\label{PQUA}
Let $E$ be a quasi-additive expectation on an algebra $\cal O$
of observables.
The definition $P (U) \dlik \sup \{E [X] \st X \preceq U\}$ determines
a unique  quasi-additive probability $P$.
\end{pro}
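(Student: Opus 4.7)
The plan is to define $P$ on the whole family $\mathcal{E}$ by using the given formula on $\mathcal{E}_U$ and extending by $P(F):=1-P(F^c)$ on $\mathcal{E}_F$, and then to verify axioms (I)--(III) and (IV') in turn. The extension to zero sets is forced by axiom (I) applied to $\Omega=F\uplus F^c$ together with normalization, which gives uniqueness of $P$ at once.

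\emph{Basic axioms.} The supremum defining $P(U)$ lies in $[0,1]$: the constant $X=0$ is admissible, and every admissible $X$ satisfies $X\le 1$, so $E[X]\le E[1]=1$ by the monotonicity of $E$ just established. This settles positivity on $\mathcal{E}_U$, positivity on $\mathcal{E}_F$ via the bound $P(U)\le 1$, and normalization $P(\Omega)=1$ (since $\Omega$ is simultaneously a zero and a co-zero set, and $1\preceq\Omega$ is admissible). Consistency of the two definitions on sets that are both zero and co-zero reduces to the identity $P(A)+P(A^c)=1$, which is itself a special case of additivity.

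\emph{Additivity.} By complementation and inclusion--exclusion every case of axiom (I) reduces to proving $P(U_1\cup U_2)=P(U_1)+P(U_2)$ for disjoint co-zero sets $U_1,U_2$ with co-zero union. For the $\ge$ direction, admissible $X_i\le F_i\subset U_i$ with $X_i\ge 0$ satisfy $F_1\cap F_2=\emptyset$, hence $X_1X_2\equiv 0$ and $X_1+X_2\preceq U_1\cup U_2$; it suffices to establish $E[X_1]+E[X_2]=E[X_1+X_2]$. For the $\le$ direction, an admissible $X\le F\subset U_1\cup U_2$ induces the disjoint splitting $F=H_1\sqcup H_2$ with $H_i:=F\cap U_j^c\subset U_i$ a zero set; a double application of Urysohn (Lemma~\ref{LURY}) produces observables $h_i$ with $H_i\le h_i\preceq U_i$ and $h_i\equiv 0$ on $H_j$, so that $X_i:=Xh_i$ lies $\preceq U_i$, satisfies $X_1+X_2=X$ (because $h_1+h_2=1$ on the support $F$), and still obeys $X_1X_2=0$, again reducing to the same identity.

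The main obstacle is thus the linearity identity $E[X_1+X_2]=E[X_1]+E[X_2]$ for non-negative $X_1,X_2\in\mathcal{O}^\infty$ with $X_1X_2\equiv 0$. My intended route mimics the final algebraic step of the Staircase Lemma: after a preliminary rescaling and a Urysohn-based shift arranged so that the ranges of $X_1$ and $X_2$ are separated in $[0,\infty)$, each $X_i$ becomes a genuine continuous function of $X_1+X_2$ and hence lies in $\mathcal{A}_{X_1+X_2}$, at which point quasi-linearity of $E$ delivers the identity. Once additivity is in hand, regularity (IV') is immediate: the bound $E[X]\le 1-P(F^c)=P(F)$ for admissible $X\le F\subset U$ follows by pitting $X$ against any competitor $Y\preceq F^c$ via additivity (noting $X+Y\le 1$), while monotonicity of $P$ on $\mathcal{E}_U$ gives the reverse direction.
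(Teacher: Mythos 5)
The load-bearing step in your outline is the identity $E[X_1+X_2]=E[X_1]+E[X_2]$ for bounded $X_1,X_2\ge 0$ with $X_1X_2\equiv 0$, and the mechanism you sketch for it does not work. No rescaling can make each $X_i$ a function of $X_1+X_2$: if some value $c>0$ is attained by $X_1$ at a point of its support and by $X_2$ at a point of its support, then $X_1+X_2$ takes the value $c$ at both points while $X_1$ takes the values $c$ and $0$ there, so no map $\phi$ (continuous or not) with $X_1=\phi(X_1+X_2)$ exists; since both ranges typically accumulate at $0$, this situation is generic. A non-constant ``Urysohn-based shift'' $X_i\mapsto X_i+Ch$ does not help either: $h$ is not in ${\cal A}_{X_i}$, so transferring $E$-values back through the shift requires $E[X_i+Ch]=E[X_i]+CE[h]$, which is an instance of the very additivity you are proving, while shifts by elements of ${\cal A}_{X_i}$ (constants included) cannot open a gap above $0$ in a range that is connected down to $0$. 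The step that does work --- and which is in fact the final algebraic step of the Staircase Lemma you appeal to, as well as the paper's own move --- uses the difference, not the sum: since $X_1,X_2\ge 0$ and $X_1X_2=0$, one has $X_1=(X_1-X_2)^+$, $X_2=(X_1-X_2)^-$ and $X_1+X_2=|X_1-X_2|$, so all three lie in ${\cal A}_{X_1-X_2}$ and quasi-linearity of $E$ on that single algebra yields the identity. (In the Staircase Lemma the trick is applied to $\tilde X_i$ and $(\beta_i-\beta_{i-1})-\tilde Y_i$; the sum only appears because ${\cal A}_{\tilde X_i-(\beta_i-\beta_{i-1})+\tilde Y_i}={\cal A}_{\tilde X_i+\tilde Y_i}$ after discarding the constant.)

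A second gap is the claim that ``by complementation and inclusion--exclusion'' every case of axiom (I) reduces to disjoint co-zero sets. Complementation does reduce the mixed case whose union is a zero set to the co-zero case, but the pure zero-set case (equivalently, the mixed case with co-zero union) is equivalent to the modularity statement $P(A)+P(B)=P(A\cup B)+P(A\cap B)$ for overlapping co-zero sets covering $\Omega$, and inclusion--exclusion is precisely what is not available for nonlinear probabilities --- its failure for non-disjoint pairs is what the Aarnes-measure example in the last section exhibits. The paper closes this case by a separate argument: from $P(F)\dlik 1-P(F^c)$ and the preliminary identity $\sup\{E[X]\st X\le U\}=\sup\{E[X]\st X\preceq U\}$ (which your write-up never establishes) one gets $P(F)=\inf\{E[X]\st F\le X\}$; disjoint zero sets are then separated by disjoint co-zero sets via Lemma~\ref{LURY}, and the same disjoint-support identity together with monotonicity gives additivity on ${\cal E}_F$, after which the mixed cases follow by the complementation juggling you intend. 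The remainder of your outline (the Urysohn splitting $X=Xh_1+Xh_2$ for the upper bound, the extension to ${\cal E}_F$, and the derivation of (IV')) does match the paper, modulo the normalization $X\ge 0$ of competitors, which you should state explicitly since $X\le 1_F$ alone permits negative values off $F$ and then $Xh_1+Xh_2=X$ can fail.
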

\begin{proof}
In order to prove additivity we will prove
the preliminary result 
$\sup \{E [X] \st X \le U\} = \sup \{E [Y] \st Y \preceq U\}$.
Let $X \le U$ and $\epsilon > 0$.
Lemma \ref{LURY} gives $(X \ge \epsilon) \le Z \preceq U$.
Consequently $Y \dlik X \cdot Z \preceq U$,
and $E [X] - E [Y] \le \norm{X - Y} < \epsilon$ implies 
$\sup \{E [X] \st X \le U\} = \sup \{E [Y] \st Y \preceq U\}$.\\
{\bf Axiom  (I)}. 
If $X \le U$ and $Y \le V$ with $U \cap V = \emptyset$,
then $X,Y \in {\cal A} (X - Y)$,
and $E [X + Y] = E [X] + E [Y]$.
From this we have $X + Y \le U \uplus V$ and we have the inequality
$P (U \uplus V) \ge P (U) + P (V)$.
In order to prove equality we assume $X \le F \subset U \uplus V$.
This gives $F = (F \cap V^c) \uplus (F \cap U^c)$,
and together with Lemma \ref{LURY} this gives
$(F \cap V^c) \le Y \preceq U$ and 
$(F \cap U^c) \le Z \preceq V$.
Finally $X = X \cdot Y + X \cdot Z$ ensures
$P (U \uplus V) \le P (U) + P (V)$,
and additivity on ${\cal E}_U$ is proven.
We can then consistently extend $P$ to ${\cal E}_F$
by $P (F) \dlik 1 - P (F^c)$.
It follows that $P (F) = \inf \{E [X] \st F \le X\}$ since
$F \le X$ is equivalent with $F^c \ge 1 - X$.
From this and Lemma \ref{LURY} we conclude that $P$ is additive on 
${\cal E}_F$.
Additivity on ${\cal E}_U$ and ${\cal E}_F$ together with
$1 = P (F) + P (F^c)$ give additivity on ${\cal E}$.\\
{\bf Axiom  (IV)'}.
We must prove $P (U) = \sup \{P (F) \st F \subset U,\; F \in {\cal E}_F\}$.
If $F \subset U$, then $U = F \uplus (U \cap F^c)$,
and additivity gives $P (F) \le P (U)$.
It is therefore sufficient to prove that $X \le G \subset U$ gives
$E [X] \le  \sup \{P (F) \st F \subset U,\; F \in {\cal E}_F\}$.
This is however a consequence of 
$E [X] \le \inf \{E [Y] \st G \le Y\} = P (G)$ where
the inequality follows from the monotonicity of $E$.\\
{\bf Axioms  (II-III)} are evidently satisfied.
\end{proof}

\begin{proof} {\bf (Theorem \ref{TREP})}
We define $P (U) = \sup \{E [X] \st X \preceq U \}$ and Proposition \ref{PQUA}
tells us that this determines a quasi-additive probability $P$.
Step 1 is to prove that $P$ is an observable probability,
and step 2 is to prove  $E [X] = \int X\; dP$.
The uniqueness of $P$ from this is a consequence
of Proposition \ref{PPRO}.\\
{\bf Step 1}
We must prove that $U_i \uparrow U$ implies $P (U_i) \uparrow P (U)$.
Let $V_{i,n} \uparrow U_i$ with $V_{i,n} \subset F_{i,n} \subset V_{i,n + 1}$.
The choices $F_{i,n} \le \tilde{X}_{i,n} \le U_i$ and
$X_{i,n} \dlik \tilde{X}_{i,1} \vee \cdots \vee  \tilde{X}_{i,n}$
gives $X_{i,n} \uparrow U_i$.
Let $X \le U$ and define $X_i = X \cdot (X_{1,i} \vee \cdots \vee X_{i,i})$.
It follows that $X_i \uparrow X$ and $X_i \le U_i$.
This gives $E [X_i] \le P (U_i)$ and $E [X] \le \lim P (U_i)$.
The equality $P (U) = \lim P (U_i)$ follows finally
from $P (U) = \sup \{E [X] \st X \le U\} \le \lim P (U_i) \le P (U)$.\\
{\bf Step 2}
The Riesz representation theorem \cite{RU:ANAL}
and the quasi-linearity of $E$ gives a
Borel probability $Q_X$ such that 
$E [\phi (X)] = \int \phi (t) \; Q_X (dt)$.
We need only prove that $Q_X = P_X$.
Let $U$ be an open subset of $\RealN$.
From the argument in step 1 it follows that we can
find $\phi_i \in C (\RealN)$ with $\phi_i \uparrow U$.
It follows that $\phi_i (X) \uparrow X^{-1} (U)$ and
the monotone convergence theorem \cite{RU:ANAL} gives
$Q_X (U) = \lim E [\phi_i (X)] 
\stackrel{(*)}{=} \sup \{E [Y] \st Y \le X^{-1} (U)\} = P_X (U)$
which proves the claim.
The equality $\stackrel{(*)}{=}$ follows from a more general statement and
will be proven in the following Lemma. 
\end{proof}

\begin{lemma}
Let $E$ be an observable expectation with a corresponding
observable probability $P$.
We have 
\begin{equation}
P (U) = \lim_{X_i \uparrow U} E [X_i].
\end{equation}
\end{lemma}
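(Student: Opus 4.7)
Write $L := \lim_i E[X_i]$; this limit exists and is at most $1$ because $E$ is monotone (by the proposition following the Staircase Lemma) and the sequence $E[X_i]$ is non-decreasing and bounded above by $E[1]=1$. I will verify $L=P(U)$ in two steps, relying on the equality $P(U)=\sup\{E[Y]\st Y\le U\}$ obtained in the preliminary observation at the start of the proof of Proposition~\ref{PQUA} (valid here because an observable expectation is in particular quasi-additive).

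For the upper bound $L\le P(U)$, note that the conditions $X_i\le X_{i+1}$ and $X_i\to 1_U$ force $X_i\le 1_U$ pointwise, i.e.\ $X_i\le U$. Therefore $E[X_i]\le \sup\{E[Y]\st Y\le U\}=P(U)$, and passing to the limit yields $L\le P(U)$. For the reverse inequality, fix any $Y\in{\cal O}^\infty$ with $Y\le U$ and form $Z_i := Y\wedge X_i$. Since $\min(s,t)=(s+t-\abs{s-t})/2$ is continuous, $Z_i=\phi(Y,X_i)\in{\cal O}^\infty$. My plan is to show $Z_i\uparrow Y$ pointwise so that axiom~(IV) delivers $E[Y]=\lim_i E[Z_i]$, and then to conclude with $E[Z_i]\le E[X_i]$ (by monotonicity, since $Z_i\le X_i$) that $E[Y]\le L$; taking the supremum over admissible $Y$ gives $P(U)\le L$.

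The pointwise convergence $Z_i\uparrow Y$ requires a brief case split. On $U$ we have $X_i(\omega)\uparrow 1\ge Y(\omega)$, so $Z_i(\omega)\uparrow Y(\omega)$ (equal to $X_i(\omega)$ until $X_i(\omega)$ passes $Y(\omega)$, and equal to $Y(\omega)$ thereafter). Off $U$ we have $Y(\omega)=0$, and monotonicity of $(X_i(\omega))$ together with $X_i(\omega)\to 0$ forces $X_i(\omega)\le 0$ for every $i$; hence $Z_i(\omega)=X_i(\omega)\uparrow 0=Y(\omega)$. The only subtle point in the whole argument is precisely this last observation, that a monotone sequence of observables increasing to $1_U$ must be non-positive outside $U$; once it is in hand, axiom~(IV) applies to $Z_i\uparrow Y$ with $Y\in{\cal O}^\infty$, and the rest is monotonicity of $E$ and the definition of $P(U)$.
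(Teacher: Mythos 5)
Correct, and essentially the paper's own argument: you identify $P(U)$ with $\sup\{E[Y] \mid Y \le U\}$ via the preliminary step in the proof of Proposition~\ref{PQUA}, get the upper bound from $X_i \le 1_U$, and get the lower bound by cutting an arbitrary $Y \le U$ down along the sequence and invoking axiom (IV) together with monotonicity of $E$ --- the only difference being that you truncate with $Y \wedge X_i$ where the paper uses the product $Y \cdot X_i$. One harmless slip: $Y \le U$ only forces $Y(\omega) \le 0$ off $U$, not $Y(\omega) = 0$, but your conclusion is unaffected since $Y \wedge X_i \uparrow Y \wedge 1_U = Y$ pointwise in every case.
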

\begin{proof}
The assumption is that the sequence $X_i$ converges pointwise
and monotonic up to the co-zero set $U$.
We will prove $\sup \{E [X] \st X \le U\} = \lim E [X_i]$.
The inequality $\ge$ follows from  $\sup \{E [X] \st X \le U\} \ge E [X_i]$.
The inequality $\le$ follows if we can prove that $X \le U$
gives $E [X] \le \lim E [X_i]$,
but this follows from $E [X] = \lim E [X \cdot X_i]$ and the
monotonicity of $E$.
\end{proof}

\section{Quasi-measurable spaces.}

In this section we present the generalization of the
$\sigma$-algebra approach to integration, 
and give an example which shows that the quasi-integral need not be linear.

Let ${\cal F}_U$ be a family of subsets in a set $\Omega$,
and let ${\cal F}_F$ be the family of complements,
${\cal F}_U$, ${\cal F}_F = \{F \st F^c \in {\cal F}_U \}$.
The pair $({\cal F}_U, {\cal F}_F)$ is said to be
a $\sigma$-quasi-algebra in $\Omega$
if ${\cal F}_U$ is closed under intersections and countable unions.
By abuse of language we will say that the 
union ${\cal F} = {\cal F}_U \cup {\cal F}_F$ of the two families
is a  $\sigma$-quasi-algebra in $\Omega$.
A quasi-measurable space is a set $\Omega$ equipped
with a $\sigma$-quasi-algebra.
A function $X: \Omega \into \RealN$ on a 
quasi-measurable space $\Omega$
is measurable if
$X^{-1} (U) \in {\cal F}_U$ for all open sets
$U \subset \RealN$.

\begin{lemma}
The family $\cal O$ of measurable functions 
$X: \Omega \into \RealN$ on a 
quasi-measurable space $\Omega$ is an algebra of observables.
\end{lemma}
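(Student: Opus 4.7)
The plan is to verify the defining property directly: given measurable $X_1,\ldots,X_n$ and $\phi\in C(\im(X_1,\ldots,X_n))$, show that $\phi(X_1,\ldots,X_n)^{-1}(U)\in {\cal F}_U$ for every open $U\subset\RealN$. I will reduce everything to the vector observable $Y=(X_1,\ldots,X_n):\Omega\into \RealN^n$ and to pullbacks of open sets in $\RealN^n$.

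First I would establish the key measurability lemma for $Y$: for every open $V\subset\RealN^n$, one has $Y^{-1}(V)\in{\cal F}_U$. Since $\RealN^n$ is second countable, $V$ is a countable union of open boxes $U_1\times\cdots\times U_n$ with $U_i$ open in $\RealN$. For each such box,
\begin{equation}
Y^{-1}(U_1\times\cdots\times U_n)=\bigcap_{i=1}^n X_i^{-1}(U_i),
\end{equation}
and each $X_i^{-1}(U_i)$ lies in ${\cal F}_U$ by measurability of $X_i$. Because a $\sigma$-quasi-algebra is closed under finite intersections and countable unions, $Y^{-1}(V)\in{\cal F}_U$.

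Next I would use the continuity of $\phi$ on the subspace $\im Y\subset\RealN^n$. For $U\subset\RealN$ open, $\phi^{-1}(U)$ is relatively open in $\im Y$, so $\phi^{-1}(U)=W\cap \im Y$ for some open $W\subset\RealN^n$. Since $Y(\Omega)\subset \im Y$, we get $Y^{-1}(W\cap\im Y)=Y^{-1}(W)$, hence
\begin{equation}
\phi(X_1,\ldots,X_n)^{-1}(U)=Y^{-1}(\phi^{-1}(U))=Y^{-1}(W)\in {\cal F}_U
\end{equation}
by the previous step. This shows $\phi(X_1,\ldots,X_n)$ is measurable, completing the verification that $\cal O$ satisfies the defining closure property of an algebra of observables.

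The routine part is the second-countability reduction; the only mildly delicate point is the passage from continuity on $\im Y$ to an open set in the ambient $\RealN^n$, which is handled by the definition of the subspace topology together with the observation $Y^{-1}(\im Y)=\Omega$. No additional structure beyond the closure of ${\cal F}_U$ under finite intersections and countable unions is needed.
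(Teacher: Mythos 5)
Your proposal is correct and follows essentially the same route as the paper's proof: pull $\phi^{-1}(U)$ back via the subspace topology to an ambient open set in $\RealN^n$, write that set as a countable union of open boxes by second countability, and conclude using closure of ${\cal F}_U$ under finite intersections and countable unions. The only difference is organizational, in that you isolate the statement about $Y^{-1}(V)$ as a separate step, which the paper does inline.
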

\begin{proof}
Let $U$ be an open subset of $\RealN$, 
$X_1, \ldots , X_n \in {\cal O}$, and 
$\phi \in C (\im (X_1,\ldots , X_n ))$.
The continuity of $\phi$ gives an open set $V \subset \RealN^n$
such that $\phi^{-1} (U) = V \cap \im (X_1,\ldots , X_n )$.
The real line is second countable, so we have
a representation $V = \cup_{i = 1}^\infty (U_i^1 \times \cdots \times U_i^n)$,
and we get
$\phi (X_1, \ldots , X_n)^{-1} (U) = 
\cup_{i = 1}^\infty [X_1^{-1}(U_i^1) \cap \cdots \cap X_1^{-1} (U_i^n)]
\in {\cal F}_U$.
This proves $\phi (X_1, \ldots , X_n)\in {\cal O}$,
and $\cal O$ is an algebra of observables.
\end{proof}

There exist examples that show that the 
family $\cal E$ of observable sets may
be a proper subfamily of the family $\cal F$.
This is our main reason for taking an algebra of observables as
the starting point.

Let $P$ be the Aarnes measure \cite{AARNES:QUASI}[p.59-64] 
on the square $\Omega = [-1,1]^2$
defined by the point $\omega_P = (0,0)$ and the curve $\gamma_P$,
where $\gamma_P$ is the border of $\Omega$.
It follows that
\begin{equation}
\int (\alpha \omega_1^2 + \beta \omega_2^2) P (d\omega) = \alpha \wedge \beta,
\;\; \alpha, \beta \ge 0, 
\end{equation} 
from consideration of the level curves of the integrand. 
From this it is clear that the integral is highly non-linear,
and the observables 
$X_1 (\omega) = \omega_1$ and
$X_2  (\omega) = \omega_2$ are incompatible.

It should be noted that the presented 
theory of nonlinear measures initiates
from a generalized $C^*$-algebraic 
formulation of quantum mechanics \cite{AARNES:PHYS}.   
This gives a program for possible extension and application of the theory.
In this article we have tried to clearify parts of the 
theory corresponding to a commutative $C^*$-algebra.

\vspace{2ex}
{\rm ACKNOWLEDGEMENT.\ } 
I would like to give my thanks to J.F. Aarnes 
for discussions while preparing this paper.

{\small
\bibliography{bib}

\begin{thebibliography}{1}

\bibitem{AARNES:PHYS}
J.F. Aarnes.
\newblock Physical states on a {$C^*$}-algebra.
\newblock {\em Acta Math.}, 122:161--172, 1969.

\bibitem{AARNES:QUASI}
J.F. Aarnes.
\newblock Quasi-states and quasi-measures.
\newblock {\em Adv. in Math.}, 86(1):41--67, 1991.

\bibitem{AarnesRustad99qprob}
J.F. Aarnes and A.B. Rustad.
\newblock Probability and quasi-measures - a new interpretation.
\newblock {\em MATH. SCAND.}, 85, 1999.

\bibitem{JAUCH}
J.M. Jauch.
\newblock {\em Foundations of quantum mechanics}.
\newblock Addison-Wesley, 1968.

\bibitem{KOLMOGOROV}
A.~Kolmogorov.
\newblock Grundbegriffe der {W}ahrscheinlichkeitsrechnung.
\newblock {\em Erg. Mat.}, 2(3), 1933.

\bibitem{NEU:QM}
J.~{von} Neumann.
\newblock {\em Mathematical foundations of quantum mechanics.}
\newblock Princeton, 1967 (1932).

\bibitem{RU:ANAL}
W.~Rudin.
\newblock {\em Real and Complex Analysis}.
\newblock McGraw-Hill, USA, third edition, 1987.

\bibitem{TARALDSEN14}
G.~Taraldsen.
\newblock Om kvantemekanikk og kunsten {\aa} gjette.
\newblock {\em Symmetri}, 2(1-2):23--29, 1995.

\end{thebibliography}
\bibliographystyle{plain}
}

\end{document}